\documentclass[12pt]{article}

\usepackage{hyperref}
\hypersetup
{
	colorlinks,
	citecolor=blue,
	filecolor=green,
	linkcolor=magenta,
	urlcolor=blue
}
\hypersetup{linktocpage}

\usepackage{fancyhdr}
\usepackage{amsfonts}
\usepackage{amsmath}
\usepackage{amssymb}
\usepackage{amsthm}
\usepackage{tikz}
\usepackage{accents}

\usepackage{mathrsfs}
\usepackage{amscd}
\usepackage{amstext}
\usepackage{authblk}

\usepackage{graphics}
\usepackage{graphicx}

\newlength{\fixboxwidth}
\newcommand{\re}{\mathbb{R}}\newcommand{\N}{\mathbb{N}}
\newcommand{\zz}{\mathbb{Z}}

\newcommand{\com}{\mathbb{C}}
\newcommand{\Z}{{\zz}^d}

\newcommand{\R}{{\re}^d}

\newcommand\dx{\,{\mathrm d}}
\newcommand{\cs}{{\mathcal S}}

\newcommand{\cf}{{\mathcal F}}
\newcommand{\cfi}{{\cf}^{-1}}

\newcommand{\supp}{{\rm supp \, }}

\newcommand{\gf}{\mathcal{F}}
\newcommand{\unif}{{\rm unif}}

\newcommand{\bproof}{\begin{proof}}
	\newcommand{\eproof}{\end{proof}}

\newcommand{\be}{\begin{equation}}
	\newcommand{\ee}{\end{equation}}
\newcommand{\beq}{\begin{eqnarray}}
	\newcommand{\beqq}{\begin{eqnarray*}}
		\newcommand{\eeq}{\end{eqnarray}}
	\newcommand{\eeqq}{\end{eqnarray*}}

\numberwithin{equation}{section}

\newtheorem{theorem}{Theorem}[section]
\newtheorem{definition}[theorem]{Definition}

\newtheorem{lemma}[theorem]{Lemma}
\newtheorem{proposition}[theorem]{Proposition}
\newtheorem{remark}[theorem]{Remark}

\begin{document}

	\title{Triebel--Lizorkin spaces of dominating mixed smoothness are algebras with respect to pointwise multiplication}
	
	\author{Van Kien Nguyen}
	\affil{University of Transport and Communications
		\\
		No 3 Cau Giay Street, Lang Ward, Hanoi, Vietnam
\\	Email: kiennv@utc.edu.vn}

	
	\date{\today}
	
	\maketitle
	\begin{abstract}
We continue our investigations on pointwise multipliers for spaces of dominating mixed
smoothness.
In this paper we prove that under a certain conditions Triebel--Lizorkin spaces of dominating mixed smoothness form algebras with respect to pointwise multiplication.
	\end{abstract}

	{\em Key words:} Pointwise multipliers; Triebel--Lizorkin spaces; dominating mixed smoothness
	

\section{Introduction}

The regularity concept related to  spaces of dominating mixed smoothness 
are standard in {Approximation Theory} \cite{Tem93B}, {Numerical Analysis} \cite{BuGr04,SST08}  and 
{Information-Based Complexity} \cite{NoWo08,NoWo10,NoWo12}.
These spaces appear naturally in many models of real world
problem in mathematical physics, finance and other fields. For instance, in a recent work
on regularity properties of solutions of the electronic Schr\"odinger equation, Yserentant \cite{Yser10}
has shown that the eigenfunctions of the electronic Schr\"odinger operator have a certain
mixed smoothness. Triebel \cite[Chapter 6]{Tri15B} has indicated a relation between Faber bases and
sampling recovery in the context of spaces with mixed smoothness and solutions of Navier-Stokes equations. In particular, when initial data belongs to spaces with mixed smoothness,
Navier-Stokes equations admit a unique solution.

Assertions on pointwise multipliers are among the key problems in the modern theory of function spaces and have found numerous applications in fields such as partial differential equations and numerical analysis \cite{BH21,MS09B}.
Pointwise multipliers for isotropic Besov and Triebel-Lizorkin spaces, over a wide range of parameters, have been studied in detail in \cite{MS85B,MS09B,Netrusov95,Tri83B,RS96B,Tri06B,LSYY24}.
In our previous papers \cite{NS1702,NS1701} we investigated 
the set of all pointwise multipliers $M(S^r_{p}W(\R))$ and $M(S^r_{p,q}B(\R))$ for the Sobolev space $S^r_{p}W(\R)$ and Besov spaces $S^r_{p,q}B(\R)$ of dominating mixed smoothness, respectively. 
It turned out that, under the natural restrictions $1\leq p<\infty$, $r\in \N$ for $S^r_{p}W(\R)$  and $1\le p\leq q \le \infty$, $r>1/p$ for $S^r_{p,q}B(\R)$, these sets are given by $S^r_{p}W(\R)_\unif$ and $S^r_{p,q}B(\R)_\unif$.
Formally, the assertion is completely parallel to the isotropic case.
However, the extension from the isotropic case to the dominating mixed case is by no means straightforward.
The standard method used in the isotropic setting, namely paramultiplication, does not appear to be appropriate.
In fact, the proof of the result in the dominating mixed case is much more involved.
Our method in these papers is based on the characterization of the underlying spaces by differences, sometimes combined with their Fourier-analytic description.

In the present paper our aim is to prove that the Triebel--Lizorkin spaces of dominating mixed smoothness form an algebra with respect to pointwise multiplication under the conditions  $0<p \le q \le \infty$, $p<\infty$ and $r>1/p$. As a consequence, we obtain that $S^r_{p,q}F(\R)\subset M(S^r_{p,q}F(\R)) $. Our method in this paper, as in \cite{NS1701,NS1702}, is the characterization  of the underlying spaces by differences but the proof is much more complicated than that in \cite{NS1701,NS1702} since the Nilkol'skijs inequality can not be applied in this situation, see \cite[page 2250]{NS1702}. At the present we do not know whether this method also works in the case $0<q<p<\infty$. However, by interpolation, we can show that the Bessel-potential space of dominating mixed smoothness $S^r_{p}H(\R)$ is a multiplication algebra for $1< p <\infty$ and  $r>\max(1/p,1-1/p)$.

The paper is organized as follows.
In Section \ref{def} we define the classes $S^r_{p,q}F(\R)$ by using  the Fourier analytic approach and give some conditions under which the spaces  $S^r_{p,q}F(\R)$ can be characterized by mixed differences. In Section \ref{sec-tools}  we will collect some required tools    from Fourier analysis 
and few basic inequalities for differences. 
Finally, in Section \ref{proofs},  we state our main results and present detailed proofs. 



\noindent
{\bf Notation.}  As usual $\N$ denotes the natural numbers, $\N_0 :=\N\cup\{0\}$, 
$\zz$ denotes the integers, 
$\re$ the real numbers, 
and $\com$ the complex numbers. The letter $d\in \N, \ d>1,$ is always reserved for the underlying dimension in $\R, \Z$ etc. For $x \in \R$, $x_i$ denotes the $i$th coordinate, i.e., $x := (x_1,\ldots, x_d)$. If $ 0< p\leq \infty$, we write
$|x|_p := \big(\sum_{i=1}^d |x_i|^p\big)^{1/p}$ with the usual modification when $p=\infty$. 
Further,
by  $\langle x, y\rangle$  we mean the usual Euclidean inner product in $\R$. The notation
$x\diamond y$ stands for $ (x_1y_1, \ldots  , x_dy_d)\in \R\,.$ The norm
of an element $x$ in the normed space $X$ will be denoted by $\|x\,|\,X\|$.


\section{Triebel--Lizorkin spaces of dominating mixed smoothness}\label{def}



We introduce the Triebel--Lizorkin spaces of dominating mixed smoothness by using the Fourier analytic approach. Let $\cs(\R)$ be the Schwartz space of all complex-valued rapidly decreasing infinitely differentiable  functions on $\R$. 
The topological dual of $\cs(\R)$, the class of tempered distributions equipped with the weak topology, is denoted by $\cs'(\R)$.
The Fourier transform on $\cs(\R)$ is given by 
\[
\cf \varphi (\xi) = (2\pi)^{-d/2} \int\limits_{\R} \, e^{-{\rm i} \langle x, \xi\rangle }\, \varphi (x)\, \dx x \, , \qquad  \xi \in \R\, .
\]
The inverse transformation is denoted by $\cfi $.
We use both notations also for the transformations defined on $\cs'(\R)$\,.

Let $\varphi_0 \in C_0^{\infty}({\re})$ be a non-negative function such that 
 $\varphi_0  \equiv  1$ on $[-1,1]$ and $\supp(\varphi_0) \subset [-\frac{3}{2},\frac{3}{2}]$. 
For $j\in \N$ we define
\begin{equation*}
         \varphi_j(\xi) = \varphi_0(2^{-j}\xi)-\varphi_0(2^{-j+1}\xi) ,\qquad\ \xi \in \re\, , 
\end{equation*}
and  
\be\label{ws-101}
\varphi_{k}(x) := \varphi_{k_1}(x_1)\cdot \ldots \cdot
         \varphi_{k_d}(x_d)\, , \qquad  x \in \R, \quad k\in \N_0^d\,. 
\ee        
This implies 
\[
\sum_{k\in \N_0^d} \varphi_k(x) =  1 \qquad  \text{for all}\ x\in \R\, , 
\]
and
\[
\supp(\varphi_k)  \subset  \Big\{ x \in \R: 2^{k_\ell-1}\le |x_\ell| \le 3 \cdot 2^{k_\ell-1}\, , \quad \ell =1, \ldots   , d\Big\}\,, \qquad k \in \N^d\, .
\]
With other words, $(\varphi_k )_{k \in \N_0^d}$ is a smooth dyadic decomposition of unity of tensor product type in $\R$.

\begin{definition}\label{eq-def1}
Let $(\varphi_k)_{k\in \N_0^d}$ be the above system. Let $0<p<\infty$, $0 < q\leq \infty$ and $r \in \re$.  Then the Triebel--Lizorkin spaces of dominating mixed smoothness $ S^{r}_{p,q}F(\re^d)$ is the
         collection of all tempered distributions $f \in \mathcal{S}'(\R)$
         such that
\begin{equation*}
          \|\, f \, |S^r_{p,q}F(\R)\|_{\varphi} :=\Bigg\|
         \Bigg\{\sum\limits_{k\in \N_0^d} 2^{r|k|_1  q}\, |\, \cfi[\varphi_{k} \cf f]
         |^q\Bigg\}^{1/q} \Bigg|L_p(\R)\Bigg\|<\infty
\end{equation*} 
with the ususal modifications if $q=\infty$.
\end{definition}

\begin{remark}\label{useful}\rm 
	The scales $S^r_{p,q}F(\R)$ are quasi-Banach spaces. They are Banach spaces if $\min(p,q)\geq 1$. The  spaces $ S^r_{p,q}F(\R)$ are independent of the chosen decomposition of unity up to
	equivalence of the quasi-norms in the corresponding spaces, see e.g. \cite[Theorem 2.2.4]{ST87B}. Hence, we shall not indicate the index ``$\varphi$" in the sequel, i.e., we simply write $\|f|S^r_{p,q}F(\R)\|$ instead of $\|f|S^r_{p,q}F(\R)\|_{\varphi}$. 
\end{remark}

\begin{definition}\label{sobolev}\rm Let $1<p<\infty$.\\
	{\rm(i)} Let $m\in \N_0$. Then the  Sobolev space of dominating mixed 
	smoothness $S^m_p W(\R)$ is the collection of all  
	$f \in L_p (\R)$ such that all distributional derivatives $D^\alpha f$ with $|\alpha|_{\infty}\le m$
	belong to $L_p (\R)$. We put
	\beqq
	\|\, f\, |S^m_pW(\R)\| := \sum_{|\alpha|_{\infty} \leq m} \, \|\, D^{\alpha}f\, |L_p(\R)\|\,.
	\eeqq 
	{\rm (ii)} Let $r\in \re$. Then Bessel-potential space of dominating mixed smoothness $S^r_{p}H(\R)$ is the collection of all $f\in \cs'(\R)$ such that
	\beqq
	\| f|S^r_{p}H(\R)\|=\big\|\gf^{-1}\big[ (1+ y_1^2)^{r/2}\cdot\ldots\cdot (1+ y_d^2)^{r/2} \gf f \big](\cdot)\big|L_p(\R)\big\| 
	\eeqq
	is finite.  
\end{definition}
\begin{remark}\rm From the Definition \ref{sobolev} we have at once $S^0_pW(\R)= S^0_pH(\R)=L_p(\R)$. In general, if $m\in \N_0$ we have $S^m_pW(\R)= S^m_pH(\R)$ in the sense of equivalent norms, see \cite[Theorem 2.3.1]{ST87B}.
\end{remark}

\begin{remark}\rm
 The scales of functions $S^r_{p,q}F(\R)$ are generalizations of $S^r_pH(\R)$. This is  a consequence  of the Littlewood-Paley type assertion for spaces of dominating mixed
	smoothness which will be stated in the following.
\end{remark}
\begin{proposition}\label{littlewood}
	Let $r\in \re$ and $1 < p< \infty$. Then we have $ S_{p_,2}^{r} F (\R) = S_p^r H (\R) $
	in the sense of equivalent norms, i.e, it holds
	\begin{equation*}
	\|\, f \, |S^r_p H(\R)\|\asymp
	\Bigg\| \Bigg(\sum_{k \in \N_0^d} 2^{2r|k|_1}|\gf ^{-1}(\varphi_{k}\gf f)|^2\Bigg)^{1/2}\Bigg|L_p (\R)\Bigg\| 
	\end{equation*}
	for all $f \in S^r_pH (\R)$. For $r=0$ we get back the Littlewood-Paley assertion $L_p(\R)=S_{p_,2}^{0} F (\R)$.
\end{proposition}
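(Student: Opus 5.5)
The plan is to reduce the statement to the vector-valued (Hilbert-space-valued) Littlewood--Paley theorem in $L_p(\R)$, applied in an iterated, tensor-product fashion, and then move the Bessel weight onto the dyadic blocks by Fourier multiplier arguments.

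\emph{Step 1: the case $r=0$.} Here the claim is $\|f|L_p(\R)\|\asymp \|(\sum_k |\cfi(\varphi_k\cf f)|^2)^{1/2}|L_p(\R)\|$. I would argue by induction on the dimension, one coordinate at a time. For fixed $(x_2,\dots,x_d)$, the one-dimensional Littlewood--Paley theorem in $x_1$ with respect to $(\varphi_{k_1})_{k_1}$ holds with constants independent of the remaining variables. By Fubini we then obtain the equivalence of $\|f|L_p\|$ with $\|(\sum_{k_1}|\cfi_1\varphi_{k_1}\cf_1 f|^2)^{1/2}|L_p\|$.

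Next apply the $\ell_2$-valued version of the one-dimensional Littlewood--Paley theorem in $x_2$ to the sequence $(\cfi_1\varphi_{k_1}\cf_1 f)_{k_1}$. This is available for $1<p<\infty$, since the operator is a Calder\'on--Zygmund operator with Hilbert-space-valued kernel. Iterating $d$ times gives the square function with the tensor-product blocks $\varphi_k$ from \eqref{ws-101}, because $\cfi[\varphi_k\cf f]$ is the composition of the one-dimensional operators in each variable. The lower estimate follows from the upper one by duality, using $\sum_k\varphi_k=1$ and enlarged blocks $\tilde\varphi_k=\sum_{|l-k|_\infty\le1}\varphi_l$ with $\tilde\varphi_k\varphi_k=\varphi_k$, together with Cauchy--Schwarz in $\ell_2$.

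\emph{Step 2: general $r$.} Put $g=\cfi[w_r\cf f]$, where $w_r(y)=\prod_i(1+y_i^2)^{r/2}$. Then $\|f|S^r_pH\|=\|g|L_p\|$, which by Step 1 is equivalent to $\|(\sum_k|\cfi\varphi_k\cf g|^2)^{1/2}|L_p\|$. It remains to compare $\cfi[\varphi_k w_r\cf f]$ with $2^{r|k|_1}\cfi[\varphi_k\cf f]$ inside the $L_p(\ell_2)$ norm. Write
\[
\varphi_k w_r=\varphi_k\cdot\big(2^{r|k|_1}\, m_k\big)\,\tilde\varphi_k, \qquad m_k(y)=2^{-r|k|_1}w_r(y)\tilde\varphi_k(y).
\]
The functions $m_k$ are of tensor product type and satisfy uniform Marcinkiewicz-type bounds $|y^\alpha D^\alpha m_k(y)|\le C_\alpha$ for $\alpha\in\{0,1\}^d$, with constants independent of $k$.

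\emph{Step 3: the main obstacle.} The key step is a vector-valued multiplier lemma: for $1<p<\infty$ and a family $(m_k)$ with uniform mixed-Mikhlin bounds,
\[
\Big\|\Big(\sum_k|\cfi m_k\cf h_k|^2\Big)^{1/2}\Big|L_p\Big\|\lesssim\Big\|\Big(\sum_k|h_k|^2\Big)^{1/2}\Big|L_p\Big\|.
\]
I would prove this again one variable at a time. In each variable the operator is a one-dimensional $\ell_2$-valued Calder\'on--Zygmund (or Marcinkiewicz) multiplier, and the one-variable bounds are iterated with Fubini as in Step 1; the mixed derivative conditions are exactly what this iteration requires. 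Alternatively, one can use a Fefferman--Stein maximal inequality for the strong maximal function, applied iteratively in each coordinate, after dominating $|\cfi m_k\cf h_k|$ by a Peetre-type maximal function.

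Applying the lemma with $h_k=2^{r|k|_1}\cfi\varphi_k\cf f$ gives one inequality. Applying it to the inverse weight $w_{-r}$ gives the other.
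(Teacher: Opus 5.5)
The paper does not prove Proposition~\ref{littlewood}. It is quoted as the known Littlewood--Paley theorem for dominating mixed smoothness (Lizorkin, Nikol'skij; see Schmeisser--Triebel \cite{ST87B}), so there is no internal argument to compare with. Your proposal is the standard proof of that result, and it is correct.

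Iterating the $\ell_2$-valued one-dimensional Littlewood--Paley theorem with Fubini gives the case $r=0$. Your duality argument with the enlarged blocks $\tilde\varphi_k$ does more than give the lower norm bound: it also shows that a tempered distribution with square function in $L_p$ is an $L_p$ function. That inclusion is what gives equality of the spaces rather than only norm equivalence on $S^r_pH$. The lifting step is sound because $\tilde\varphi_k\equiv 1$ on $\supp\varphi_k$. (Your displayed factorization therefore contains $\tilde\varphi_k$ twice, which is harmless.) Both $2^{-r|k|_1}w_r\tilde\varphi_k$ and $2^{r|k|_1}w_{-r}\tilde\varphi_k$ are tensor products of one-dimensional Mikhlin multipliers, with constants independent of $k$.

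The one place to tighten is the auxiliary lemma in Step 3.

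\textbf{Calder\'on--Zygmund route.} The diagonal operator $(h_k)\mapsto(\cfi m_k\cf h_k)$ has kernel with operator norm $\sup_k|\cfi m_k(x)|$. The H\"ormander condition must therefore be verified with this supremum inside the integral. Checking it directly needs uniform pointwise kernel bounds, and first-order mixed bounds ($\alpha\in\{0,1\}^d$) do not provide those. So either state the lemma with higher-order bounds, which your $m_k$ satisfy uniformly to every order, or invoke the Hilbert-space-valued Marcinkiewicz theorem.

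\textbf{Peetre route.} Your second option is actually the most economical, and it uses only tools already in the paper. The function $h_k=2^{r|k|_1}\cfi\varphi_k\cf f$ has Fourier support in $\{|y_i|\le 3\cdot 2^{k_i-1}\}$. The rescaled multiplier $m_k(2^k\diamond\cdot)$ is bounded in $C^N$ with support in a fixed compact set, uniformly in $k$. For $k_i\ge 2$ its $i$-th factor is $(2^{-2k_i}+\eta_i^2)^{r/2}$ times a fixed bump supported in $1/4\le|\eta_i|\le 3$. Hence $|\cfi[m_k\cf h_k]|\lesssim P_{2^k,a}h_k$ pointwise. Proposition~\ref{peetremax} with $q=2$ and $a>1/\min(p,2)$ then gives the estimate. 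The same applies in the reverse direction with $\cfi\varphi_k\cf g$ in place of $h_k$.
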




We now turn to the characterization of the Triebel--Lizorkin spaces of dominating mixed smoothness by differences.
Let $j \in [d]= \{1,2, \ldots  , d\}$, $m \in \N$, $h \in \re$ and $x \in \R$. We define the $m$-th order difference of $f$ in direction $j$ by
\begin{equation*}
 \Delta_{h,j}^{m} f(x):= \sum_{\ell =0}^{m} (-1)^{m-\ell} \, \binom{m}{\ell} \, 
f(x_1, \ldots  , x_{j-1}, x_j + \ell h, x_{j+1}, \ldots  , x_d)\, .
\end{equation*}
For $e\subset [d]$, $ h  \in \R$ and $m \in \N_0^d$ the mixed $(m,e)$-th difference operator $\Delta_h^{m,e}$ is defined to be 
 \[
 \Delta_{ h}^{m,e} := \prod_{i \in e} \Delta_{h_i,i}^{m_i} \qquad\mbox{and}\qquad \Delta_h^{m,\emptyset} :=  \operatorname{Id} \,, 
 \]
where $\operatorname{Id}f = f$. 
For a set $e\subset [d]$ we denote
$$\N_0^d(e):=\big\{k\in \N_0^d: \ k_i=0\ \text{if}\ i\not \in e\big\}.$$
Let $k \in \N_0^d$.
For brevity we write $2^{-k} $ instead of the vector $(2^{-k_1}, 2^{-k_2}, \ldots  , 2^{-k_d})$. We have the following theorem whose proof can be found in \cite{Ull06} and  \cite[Section 2.3.3]{ST87B}  for the case $d=2$.

\begin{proposition}\label{diff} 
Let $0 < p <\infty$, $0< q\leq \infty$ and $r>\frac{1}{\min(p,q)}$. Let $m=(m_1,\ldots,m_d)\in \N^d$ such that $r < m_i$, $i=1,\ldots,d$. 
Then the Triebel--Lizorkin space of dominating mixed smoothness $S^r_{p,q}F(\R)$
is the collection of all $f\in L_p(\R)$ such that
$$
    \big\|\, f \, \big| S^r_{p,q}F(\R)\big\|_{(m)} := 
\sum_{e\subset [d]}\Bigg\|\Bigg\{\sum\limits_{k\in \N_0^d(e)} 2^{r|k|_1 q} \sup_{|h_i|\leq 2^{-k_i}\atop i\in e}\big|\Delta_{ h}^{m,e}f(\cdot)\big|^{q}\Bigg\}^{1/q}\Bigg|L_p(\R)\Bigg\|\, 
$$
is finite (with the usual modification if $q=\infty$). 
Furthermore,  $\|\, \cdot \, | S^r_{p,q}F(\R)\|_{(m)}$ generates a norm equivalent to $\|\,\cdot \, | S^r_{p,q}F(\R)\|$ on $L_p (\R)$.
\end{proposition}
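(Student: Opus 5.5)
The strategy is to prove the two inequalities between the Fourier-analytic quasi-norm of Definition \ref{eq-def1} and the difference quasi-norm $\|\cdot|S^r_{p,q}F(\R)\|_{(m)}$ separately. We exploit the tensor-product structure so that the argument reduces, coordinate by coordinate, to the one-dimensional theory of differences and Peetre maximal functions. Since the statement is a mixed analogue of the classical isotropic result (Triebel's Theorem 2.5.10 in Theory of Function Spaces), we follow that proof, replacing every one-dimensional object by a product over $i\in e$. We fix $e\subset[d]$ throughout and treat each summand separately.

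\textbf{Step 1: the difference norm is controlled by the Fourier norm.} Write $f=\sum_{\ell\in\N_0^d}f_\ell$ with $f_\ell=\cfi[\varphi_\ell\cf f]$. For $k\in\N_0^d(e)$ and $|h_i|\le 2^{-k_i}$ ($i\in e$), we have $\Delta_h^{m,e}f=\sum_\ell \Delta_h^{m,e}f_\ell$. In each coordinate $i\in e$ we split according to whether $\ell_i\le k_i$ or $\ell_i>k_i$.
\begin{itemize}
\item For $\ell_i\le k_i$ we use the mean value theorem $m_i$ times in direction $i$. This gives a factor $(2^{\ell_i-k_i})^{m_i}$ times the Peetre maximal function $P_{\ell,a}f(x)=\sup_y |f_\ell(x-y)|/\prod_i(1+2^{\ell_i}|y_i|)^{a}$, with a harmless polynomial loss $2^{(k_i-\ell_i)a}$.
\item For $\ell_i>k_i$ we simply bound the difference by $\sum_{j}|f_\ell(x+jh)|\lesssim 2^{(\ell_i-k_i)a}P_{\ell,a}f(x)$.
\end{itemize}
Multiplying over $i\in e$ yields
\[
2^{r|k|_1}\sup_h|\Delta^{m,e}_hf(x)|\lesssim\sum_\ell \prod_{i\in e}2^{-(k_i-\ell_i)(r-a)}\min\big(2^{(\ell_i-k_i)(m_i-r)},1\big)\,2^{r|\ell|_1}P_{\ell,a}f(x).
\]
In directions $i\notin e$ we sum over $\ell_i$ without a gain. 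This costs nothing, because after the $k$-summation we can reassemble $\sum_{\ell_i}$ back into the $L_p$-norm via the maximal inequality. Alternatively we group $\ell$ by its $e$-part and use $\sum_{\ell_{e^c}} f_\ell$ as a partial Littlewood--Paley block. A discrete convolution (Young) inequality in $\ell_q$ is available provided $a<r<m_i$. We then apply the Peetre maximal inequality of tensor type (the vector-valued iterated Fefferman--Stein inequality), which requires $a>1/\min(p,q)$. This explains the hypothesis $r>1/\min(p,q)$: choose $1/\min(p,q)<a<r$.

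\textbf{Step 2: the Fourier norm is controlled by the difference norm.} Here we need to represent $f_k$ through differences. In one dimension, choose $\psi$ with $\cf\psi$ supported away from $0$ such that $\cfi[\varphi_{k_i}\cf g]$ can be written as an average $\int \Delta^{m_i}_{2^{-k_i}t}g\,\rho(t)\dx t$ for a rapidly decaying kernel $\rho$. This is possible since $(e^{i\xi h}-1)^{m_i}$ does not vanish on the support of $\varphi_{k_i}$ for suitably averaged $h$. We tensorize over $i\in e$, where $e=\{i:k_i\ge1\}$; for $k_i=0$ no difference is needed. This gives
\[
|f_k(x)|\lesssim\int\prod_{i\in e}(1+|t_i|)^{-N}\,|\Delta^{m,e}_{2^{-k}\diamond t}f(x)|\dx t.
\]
The difficulty is that $t$ is unbounded, while the difference norm only takes sup over $|h_i|\le 2^{-k_i}$. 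We handle this by dyadically decomposing $|t_i|\sim2^{\nu_i}$ and using $\Delta^{m_i}_{2^\nu h}$ as a combination of translates of $\Delta^{m_i}_{h}$. We then dominate the resulting translates by a maximal function of order $\min(p,q)$, namely
\[
\Big(\sup_{|y_i|\le 2^{\nu_i-k_i}}|g(x+y)|^{u}\Big)\lesssim 2^{|\nu|_1}\,M_e\big(|g|^u\big)(x),\qquad u<\min(p,q),
\]
after first passing to a ``local'' sup-of-differences function, which is subharmonic-like. This is the \textbf{main obstacle}. Quasi-norm cases ($p$ or $q<1$) forbid a direct integral estimate, so one must combine the $u$-power trick with the strong maximal inequality on $L_{p/u}(\ell_{q/u})$. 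This again needs $u<\min(p,q)$, compatible with the polynomial decay in $\nu$ coming from $N$ large. Finally, the $L_p$ membership of $f$ (the $e=\emptyset$ term) controls the low-frequency parts.

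Combining Steps 1 and 2 gives the equivalence of $\|\cdot|S^r_{p,q}F(\R)\|_{(m)}$ and $\|\cdot|S^r_{p,q}F(\R)\|$ on $L_p(\R)$. This also shows that every $f\in S^r_{p,q}F(\R)$ lies in $L_p$, since $r>1/\min(p,q)\ge$ the embedding threshold for $p<1$.
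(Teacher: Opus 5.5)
The paper does not prove this proposition; it quotes it from \cite{Ull06} and \cite[Section 2.3.3]{ST87B}. Your argument therefore has to stand on its own.

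Your Step 1 is the standard argument, the same Peetre-maximal-function technique the paper uses in Step 3 of the proof of Theorem \ref{main-be}, with $1/\min(p,q)<a<r$. It is sound, but your displayed coefficient is wrong. For $\ell_i\le k_i$ you have $|2^{\ell_i}h_i|\le 1$, so Lemma \ref{onedim} gives $2^{-(k_i-\ell_i)(m_i-r)}$ with no loss. The loss $2^{(\ell_i-k_i)a}$ occurs only for $\ell_i>k_i$, where it gives $2^{-(\ell_i-k_i)(r-a)}$. As written, your product grows when $\ell_i>k_i$. Also, in the directions $i\notin e$ you do gain $2^{-r\ell_i}$.

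The genuine gap is in Step 2, at the point you call the main obstacle. Non-vanishing of $(e^{{\rm i}2^{-k_i}h_i\xi_i}-1)^{m_i}$ on $\supp\varphi_{k_i}$ gives $f_k=K_k*\Delta^{m,e}_{2^{-k}\diamond h}f$. These are differences at translated points $x-2^{-k}\diamond y$, with $y$ unbounded. Your variant with unbounded steps reduces to the same thing after telescoping $\Delta^m_{2^\nu s}$ into translates of $\Delta^m_s$. (The exact formula $\cfi[\varphi_{k_i}\cf g]=\int\rho(t)\Delta^{m_i}_{2^{-k_i}t}g\dx t$ with rapidly decaying $\rho$ would require solving $\sum_{\ell=0}^{m}(-1)^{m-\ell}\binom{m}{\ell}\check\rho(\ell\eta)=\varphi_1(2\eta)$, which you do not do; this is secondary.) You then need
\[
\sup_{|y_i|\le 2^{\nu_i-k_i}}G_k(x+y)^u\lesssim 2^{|\nu|_1}M_e\big(G_k^u\big)(x),\qquad G_k:=\sup_{|s_i|\le 2^{-k_i},\, i\in e}\big|\Delta^{m,e}_sf\big|,
\]
with $u<\min(p,q)$. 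This kind of sub-mean-value inequality is exactly what band-limitedness buys; it is Peetre's inequality. But $G_k$ is not band-limited, and you prove no property making it ``subharmonic-like''. For general functions, a supremum over a box is not dominated by averages.

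Without this step you can close only the case $\min(p,q)>1$. There $|f_k|\le|K_k|*G_k\lesssim MG_k$, with $M$ the strong maximal function, and the vector-valued Fefferman--Stein inequality finishes. For $\min(p,q)\le1$ the converse inequality is not proved. The same defect affects the directions with $k_i=0$ when $p\le 1$: $\cfi\varphi_0$ acts by convolution, which is unbounded on $L_p$, and $f$ is not band-limited. So ``$L_p$ membership controls the low frequencies'' is unjustified.

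The missing idea is to avoid translates by using local means whose kernels are built from differences.
\begin{itemize}
\item Take $\rho\in C_0^\infty(\re)$ with $\int\rho=1$, $\supp\rho\subset[-1,1]$ and $\int\rho(h)h^m\dx h\neq0$.
\item Set $T_tg(x):=\sum_{\ell=1}^{m}(-1)^{\ell+1}\binom{m}{\ell}\int\rho(h)g(x+\ell th)\dx h$. Then $g-T_tg=(-1)^m\int\rho(h)\Delta^m_{th}g\dx h$.
\item Hence $T_{t/2}-T_t$ is convolution with a smooth, compactly supported kernel with $m$ vanishing moments that satisfies the Tauberian condition. Moreover $|(T_{t/2}-T_t)g(x)|\lesssim\sup_{|s|\le t}|\Delta^m_sg(x)|$ at the same point $x$.
\item Tensorize, using $T_1$ in the directions with $k_i=0$. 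This gives $|\Psi_k*f(x)|\lesssim\sum_{e'\supset e}\sup_{|h_i|\le 2^{-k_i},\, i\in e'}|\Delta^{m,e'}_hf(x)|$.
\item The local-means characterization of $S^r_{p,q}F(\R)$ (Rychkov's theorem in tensor form, see \cite{Vy06}) holds for all $0<p<\infty$, $0<q\le\infty$, since $m_i>r$. It yields $\|f|S^r_{p,q}F(\R)\|\lesssim\|f|S^r_{p,q}F(\R)\|_{(m)}$.
\end{itemize}
No maximal function of differences is needed, and $r>1/\min(p,q)$ enters only through Step 1.
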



\section{Tools from Fourier analysis}\label{sec-tools}


In this section we will collect some required tools from Fourier analysis.  
The following construction of a maximal function is essentially  due to Peetre, but based on earlier work of Fefferman and Stein.
Let $a>0$ and  $b=(b_1,\ldots,b_d)$, $b_i>0$, $i=1,\ldots,d$ be fixed. Let $f$ be a regular distribution  such that  $\gf f$ is compactly
supported. We define the Peetre maximal function $P_{b,a}f$ by
\begin{equation*}
P_{b,a}f(x) = \sup\limits_{z\in \R} \frac{|f(x-z)|}{\prod_{i=1}^d(1+|b_iz_i|)^a}
= \sup\limits_{y\in \R} \frac{|f(y)|}{\prod_{i=1}^d(1+|b_i(x_i-y_i)|)^a},\ x\in \R . 
\end{equation*}

\begin{lemma}\label{maxunglem} Let $0<p\leq \infty$ and $\Omega \subset \R$ be a compact set. Let further $a>0$
	and $\alpha = (\alpha_1,\ldots,\alpha_d) \in \N_0^d$. Then there exists a constant 
	$C>0$ (independent of $f$) such that
	\begin{equation*}
	\begin{split}
		P_{(1,\ldots ,1),a}(D^{\alpha}f)(x)\,  \leq\, C P_{(1,\ldots ,1),a} f(x)  
	\end{split}
	\label{g1}
	\end{equation*}
	holds for all $f\in L_p(\R)$ with $\supp(\gf f)\subset \Omega$ and $x\in \R.$  
\end{lemma}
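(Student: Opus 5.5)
The plan is the classical reproducing-kernel argument for Peetre maximal functions, carried out in the tensor-product setting. Since $\Omega$ is compact, fix $\psi\in\cs(\R)$ with $\gf\psi\equiv 1$ on a neighbourhood of $\Omega$. Every $f\in L_p(\R)$ with $\supp(\gf f)\subset\Omega$ is then a tempered distribution whose Fourier transform is compactly supported. By the Paley--Wiener--Schwartz theorem, $f$ is therefore (equal a.e. to) a smooth function of at most polynomial growth. We then have the reproducing formula
\[
f = (2\pi)^{-d/2}\,\psi * f, \qquad\text{hence}\qquad D^\alpha f = (2\pi)^{-d/2}\,(D^\alpha\psi) * f ,
\]
pointwise, with the constant fixed by the normalization of $\gf$. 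This reduces the lemma to a pointwise estimate of a convolution with the Schwartz function $D^\alpha\psi$.

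\textbf{Step 1 (reduction to a kernel integral).} Fix $x,z\in\R$ and write
\[
|D^\alpha f(x-z)| \le c\int_{\R} |D^\alpha\psi(y)|\,|f(x-z-y)|\dx y .
\]
From the definition of $P_{(1,\ldots,1),a}f(x)$ we get
\[
|f(x-z-y)| \le P_{(1,\ldots,1),a}f(x)\prod_{i=1}^d(1+|z_i+y_i|)^a .
\]

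\textbf{Step 2 (coordinatewise Peetre inequality).} In each coordinate use the elementary bound $(1+|z_i+y_i|)^a\le (1+|z_i|)^a(1+|y_i|)^a$. Dividing by $\prod_i(1+|z_i|)^a$ and taking the supremum over $z$ gives
\[
P_{(1,\ldots,1),a}(D^\alpha f)(x)\le c\,P_{(1,\ldots,1),a}f(x)\int_{\R}|D^\alpha\psi(y)|\prod_{i=1}^d(1+|y_i|)^a\dx y .
\]
The last integral is finite because $D^\alpha\psi\in\cs(\R)$ decays faster than any polynomial, for instance faster than $\prod_i(1+|y_i|)^{-a-2}$. So $C$ depends only on $\psi$ (hence on $\Omega$), on $\alpha$ and on $a$, and not on $f$ or $p$.

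\textbf{Main obstacle.} The only delicate point is justifying the reproducing formula and the finiteness issues for all $0<p\le\infty$. For $p<1$ the function $f$ need not be integrable, so the convolution must be understood via Paley--Wiener: $f$ is smooth of polynomial growth, and $\psi*f$ converges absolutely because $\psi$ is Schwartz.

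The formula $f=c\,\psi*f$ follows from $\gf(\psi*f)=c\,\gf\psi\cdot\gf f=c\,\gf f$ in $\cs'(\R)$, using that $\gf\psi=1$ on a neighbourhood of $\supp \gf f$. Differentiating under the integral is legitimate because of the rapid decay of $\psi$ against the polynomial growth of $f$.

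Finally, if $P_{(1,\ldots,1),a}f(x)=\infty$ the inequality is trivial. Otherwise the bound in Step 1 holds for every $y$ and the argument goes through. No restriction on $p$ enters, which explains why the lemma is stated for all $0<p\le\infty$.
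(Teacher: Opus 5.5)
Your proof is correct. The reproducing identity $f=(2\pi)^{-d/2}\psi*f$ with $\gf\psi\equiv 1$ near $\Omega$, the move of $D^\alpha$ onto the Schwartz kernel, and the coordinatewise bound $(1+|z_i+y_i|)^a\le(1+|z_i|)^a(1+|y_i|)^a$ make up the standard argument behind the result the paper cites from \cite[Theorem 1.6.4]{ST87B}; the paper gives no proof of its own, so there is nothing further to compare.
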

\begin{remark}\rm The proof of Lemma \ref{maxunglem} can be found in \cite[Theorem 1.6.4]{ST87B}. Note that the constant $C$ there depends on $\Omega$. Let $\Omega=[-b_1,b_1]\times \ldots \times [-b_d,b_d]$, $b_i>0$, $i=1,\ldots,d$ and $\supp(\gf f)\subset \Omega$. Then applying Lemma \ref{maxunglem} for the function $f(x_1/b_1,\ldots,x_d/b_d)$ we obtain
	\begin{equation*}\label{eq-Peetre}
	\begin{split}
		P_{b,a}(D^{\alpha}f)(x) \,\leq \, C \Bigg(\prod_{i=1}^db_i^{\alpha_i}\Bigg)P_{b,a} f(x) 
	\end{split}
	\end{equation*}
	for all $f\in L_p(\R)$ with $\supp(\gf f)\subset \Omega$ and $x\in \R$. The constant $C$ now is independent of $f$ and $b=(b_1,\ldots,b_d)$.
\end{remark}
We will need the vector-valued Peetre maximal inequality. For more details see \cite[Theorem 1.10.2]{ST87B} and \cite[Proposition 2.3.4]{Han10}.

\begin{proposition}\label{peetremax}
Let $0<p< \infty$, $0<q\leq\infty$ and $\{\Omega_{k}\}_{k\in \N_0^d} $ be a sequence of compact subsets of $\R$ 
\begin{equation*}\label{comp}
\Omega_{k}=\big\{x\in \R: \ |x_{k_i}|\leq b_{k_i},\ i=1,\ldots,d\big\},\ \ \ k\in \N_0^d,
\end{equation*}
with $b_{k}=(b_{k_1},\ldots,b_{k_d})\in \R$, $b_{k_i}>0,$ $\ i=1,\ldots,d$. Let further $a>\frac{1}{ \min(p,q)}$. Then there exists a positive constant $C$ independent of $\{b_k\}_{k\in \N_0^d}$ such that
\begin{equation*}
\Bigg\|\Bigg\{\sum_{k\in \N_0^d} (P_{b_{k},a}f_{k})^q\Bigg\}^{1/q} \Bigg|L_p(\R)\Bigg\|\leq C\Bigg\|\Bigg\{\sum_{k\in \N_0^d} |f_{k}|^q\Bigg\}^{1/q} \Bigg|L_p(\R)\Bigg\|
\end{equation*}
holds for all systems $\{f_{k}\}_{k\in \N_0^d} \in L_p(\ell_q)$ with $\supp(\gf f_k)\subset \Omega_k$.
\end{proposition}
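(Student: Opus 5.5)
This is the vector-valued Peetre maximal inequality in the anisotropic tensor-product setting, so I would reduce it, one coordinate at a time, to the one-dimensional Fefferman--Stein inequality.

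\textbf{Step 1 (pointwise bound).} Fix $k$, write $b=b_k$, and let $0<t<\min(p,q)$ with $at>1$. Since $\supp(\gf f_k)\subset\Omega_k$, the rescaled function $g(x)=f_k(x_1/b_1,\ldots,x_d/b_d)$ has spectrum in $[-1,1]^d$. For such functions a Plancherel--Polya--Nikol'skij type mean value argument applies: one combines Lemma \ref{maxunglem} (derivatives controlled by the Peetre function) with the mean value theorem on small cubes. This should give
\[
P_{(1,\ldots,1),a}\,g(x)\le C\Big(M_1\cdots M_d\,|g|^t\Big)^{1/t}(x),
\]
where $M_i$ is the one-dimensional Hardy--Littlewood maximal operator acting in the $i$-th variable. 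The mean value argument produces a small-cube average of $|g|^t$ plus a term $\varepsilon\,P_{(1,\ldots,1),a}g$ on the right. We absorb that term to the left, which is legitimate once we know $P_{(1,\ldots,1),a}g<\infty$; this finiteness holds for $a$ large by the Paley--Wiener theorem, and a standard approximation then covers general $a$. The weight $\prod_i(1+|z_i|)^{-at}$ is a product of integrable one-dimensional radial-decreasing factors, since $at>1$. It is therefore dominated by the iterated strong maximal function $M_d\cdots M_1$. Rescaling back gives the same bound for $P_{b,a}f_k$, with $C$ independent of $b$, because the maximal operators commute with anisotropic dilations.

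\textbf{Step 2 (vector-valued maximal inequality).} Applying Step 1 to each $k$ gives
\[
\Big\|\Big\{\sum_k (P_{b_k,a}f_k)^q\Big\}^{1/q}\Big|L_p\Big\|
\le C\Big\|\Big\{\sum_k \big(M_d\cdots M_1|f_k|^t\big)^{q/t}\Big\}^{t/q}\Big|L_{p/t}\Big\|^{1/t}.
\]
Since $p/t>1$ and $q/t>1$, we apply the Fefferman--Stein inequality in $L_{p/t}(\ell_{q/t})$ for $M_1$, then $M_2$, and so on. For fixed values of the other variables, each $M_i$ is a one-dimensional maximal operator, so the Fefferman--Stein inequality applies in the $x_i$ variable with the others frozen. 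Fubini then gives boundedness on $L_{p/t}(\R;\ell_{q/t})$. Iterating $d$ times removes all the maximal operators and yields the claim.

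\textbf{Main obstacle.} I expect the hardest part to be Step 1: proving the pointwise estimate with a constant uniform in $b$, and justifying the absorption argument (the finiteness of the Peetre maximal function). The remaining steps are standard.
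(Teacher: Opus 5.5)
The paper does not prove this proposition itself; it only cites \cite[Theorem 1.10.2]{ST87B} and \cite[Proposition 2.3.4]{Han10}. Your argument is the standard one behind those references and is correct: the pointwise bound $P_{b_k,a}f_k\le C\,(M_d\cdots M_1|f_k|^t)^{1/t}$ with $1/a<t<\min(p,q)$, made uniform in $b_k$ by anisotropic dilation, followed by $d$ applications of the one-dimensional Fefferman--Stein inequality in $L_{p/t}(\ell_{q/t})$ via Fubini. The one point you leave sketchy, finiteness in the absorption step, is settled by the approximation you allude to: apply the estimate to $g\,\psi(\varepsilon\,\cdot)$ with $\psi\in\cs(\R)$, $\psi(0)=1$ and $\supp\gf\psi$ compact, so the spectrum stays in a fixed cube and the constants stay uniform, and then let $\varepsilon\to0$.
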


A very useful  relation between Peetre maximal function and differences is given by the following 
lemma which is a counterpart of \cite[Lemma 3.3.1]{Ull06}, see also  \cite[Lemma 2.3.3]{ST87B}.

 \begin{lemma}\label{onedim}\rm Let $a,b>0$, $m\in \N$, $h,k\in\re$ and $f\in \cs'(\re)$ with
	$\supp (\gf f) \subset [-b,b]$. Then there exists a constant $C>0$ independently of $f$, $b$ and $h$
	such that
	$$
	|\bigtriangleup^m_hf(x+kh)| \leq C\max(1,|bh|^a)\min(1,|bh|^m)P_{b,a}f(x)
	$$
	holds for all $x \in \re$.
\end{lemma}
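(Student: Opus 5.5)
The plan is to reduce to the case $b=1$ by dilation and then prove the bound there with the mean value theorem and the Peetre inequality of Lemma \ref{maxunglem}. Put $g(y):=f(y/b)$. Then $\supp(\gf g)\subset[-1,1]$ and $\Delta^m_h f(x+kh)=\Delta^m_{bh}g(bx+kbh)$. Moreover
\[
P_{b,a}f(x)=\sup_{y}\frac{|f(y)|}{(1+b|x-y|)^a}=\sup_{u}\frac{|g(u)|}{(1+|bx-u|)^a}=P_{1,a}g(bx).
\]
So it suffices to prove, for $\supp(\gf g)\subset[-1,1]$ and $t=bh$,
$$|\Delta^m_t g(z+kt)|\le C\max(1,|t|^a)\min(1,|t|^m)P_{1,a}g(z)$$
with $C$ independent of $g$, $t$ and $z$. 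I will read $k$ as ranging over a fixed bounded set, say $|k|\le K$, and let $C$ depend on $m,a,K$. This is how the lemma is used (typically $k=0$, with Proposition \ref{diff} in mind), because for unbounded $k$ the factor $(1+|kt|)^a$ cannot be absorbed.

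\emph{Case $|t|\ge1$.} Estimate each term of the difference separately. Each term is $|g(z+(k+\ell)t)|$, and
$$|g(z+(k+\ell)t)|\le (1+|(k+\ell)t|)^aP_{1,a}g(z)\le C(1+|t|)^aP_{1,a}g(z)\le C'|t|^aP_{1,a}g(z).$$
Summing the $m+1$ terms gives the bound, since here $\min(1,|t|^m)=1$ and $\max(1,|t|^a)=|t|^a$.

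\emph{Case $|t|<1$.} Write the difference as an iterated integral of the $m$th derivative:
$$\Delta^m_t g(y)=\int_0^t\!\cdots\!\int_0^t g^{(m)}(y+s_1+\dots+s_m)\,ds_1\cdots ds_m.$$
This gives $|\Delta^m_t g(z+kt)|\le |t|^m\sup_{|w|\le (|k|+m)}|g^{(m)}(z+w)|$. For each such $w$, the definition of the Peetre function gives
$$|g^{(m)}(z+w)|\le (1+|w|)^aP_{1,a}(g^{(m)})(z)\le C\,P_{1,a}(g^{(m)})(z).$$
Then Lemma \ref{maxunglem} with $d=1$, $\Omega=[-1,1]$ and $\alpha=m$ gives $P_{1,a}(g^{(m)})(z)\le C P_{1,a}g(z)$. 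Since $\max(1,|t|^a)=1$ and $\min(1,|t|^m)=|t|^m$ in this range, the claimed bound follows.

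\textbf{Obstacles.} The main point is that the constant stays independent of $b$. Lemma \ref{maxunglem} is applied only once, with the fixed compact set $[-1,1]$, and all dependence on $b$ is absorbed by the dilation identity above. The subsidiary issue is the smoothness of $g$. By Paley--Wiener--Schwartz, $g$ is an entire function of polynomial growth, so the integral representation of the difference is legitimate. If $P_{1,a}g(z)=\infty$ the inequality is trivial.
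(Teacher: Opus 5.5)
Your proof is correct and follows essentially the same route as the paper: for small $|bh|$ you combine a mean-value (here, iterated-integral) bound with the derivative estimate of Lemma \ref{maxunglem}, and for large $|bh|$ you estimate each term of the difference directly against $P_{b,a}f$. The differences are cosmetic. You dilate to $b=1$ at the start instead of invoking the rescaled inequality from the remark, and you split into the cases $|t|<1$ and $|t|\ge 1$ rather than proving both bounds for all $h$ and taking the minimum. You also make explicit that the constant depends on $k$, which the paper's factor $(m+|k|)^a$ implicitly does as well.
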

\begin{proof} From the
mean value theorem we have for $\xi$ with $|\xi-x|\leq (|k|+1)h$  that
$$
|\bigtriangleup^1_hf(x+kh) |= |f'(\xi) h|\leq \sup_{|y|\leq (|k|+1)|h|}|f(x-y)| \cdot |h|.
$$ 
	By iteration we get
	\begin{equation*} 
\begin{aligned}
	|\bigtriangleup^m_h f(x+kh)|&=  |\bigtriangleup^1_h(\bigtriangleup^{m-1}_h f)(x+kh)|
	\\
	& \leq   \sup\limits_{|y|\leq (1+|k|)|h|} |\bigtriangleup^{m-1}_h f'(x-y)|\cdot |h|
	\\
	&\leq  |h|^m\sup\limits_{|y|\leq (m+|k|)|h|} |f^{(m)}(x-y)| . 
\end{aligned}
	\end{equation*}
This leads to
	\begin{equation*}
		\begin{split}
			|\bigtriangleup^m_h f(x+kh)| &\leq |h|^m\sup\limits_{y\in \re}\frac{|f^{(m)}(x-y)|}{1+|by|^a}\cdot
			\sup\limits_{|y|\leq (m+|k|)|h|}(1+|by|^a)\\
			&= |h|^m \sup\limits_{y\in \re}\frac{|f^{(m)}(x-y)|}{1+|by|^a}\cdot
			\big(1+(m+|k|)^a|bh|^a\big)\\
			&\leq 2(m+|k|)^a
			|h|^m \max(1,|bh|^a)\sup\limits_{y\in \re} \frac{|f^{(m)}(x-y)|}{1+|by|^a}.
		\end{split}
		\label{f2}
	\end{equation*}
Using \eqref{eq-Peetre}  we arrive at
\begin{equation*}
	\begin{aligned}
|\bigtriangleup^m_h f(x+kh)| & \leq 2(m+|k|)^a
|bh|^m \max(1,|bh|^a)\sup\limits_{y\in \re} \frac{|f(x-y)|}{1+|by|^a}
\\
& =2(m+|k|)^a
|bh|^m \max(1,|bh|^a) P_{b,a}f(x).
	\end{aligned}
\end{equation*}
On the other hand we have
	\begin{equation*}
		\begin{split}
			\big|\bigtriangleup^m_hf(x+kh)\big| &= \left|\sum\limits_{j=0}^m (-1)^j \binom{m}{j}f(x+(m-j+k)h)\right| \\
			&\leq C \sup\limits_{|y|\leq (m+|k|)|h|}\frac{|f(x-y)|}{1+|by|^a}(1+|by|^a)\\
			&\leq 2C\sup\limits_{y \in \re} \frac{|f(x-y)|}{1+|by|^a}\cdot\max\big(1,|(m+|k|)hb|^a\big)\\
			&\leq 2C (m+|k|)^a\max(1,|hb|^a)P_{b,a}f(x).\label{est2}
		\end{split}
	\end{equation*}
This completes the proof. 
\end{proof}
 
 Applying the above result iteratively  with respect to components in $e\subset [d]$ we get the following modified 
 version in the multivariate situation.
 
 \begin{lemma}\label{ddim-1}
 Let  $a>0$, $e\subset [d]$, $m \in \N^d$  and $h ,k\in \R$. Let further $f\in
 \mathcal{S}'(\R)$ with $\supp(\mathcal{F} f) \subset Q_{b}$, where
 $$
   Q_{b}:=[-b_1,b_1]\times \ldots \times [-b_d,b_d]\,,\ \ b_i>0,\ \ i=1, \ldots ,d.
 $$
 Then there exists a constant $C>0$ (independent of $f$, $b$, $x$
 and $h$) such that
 \begin{equation*}
 |\Delta^{{m},e}_h  f(x+k\diamond h)|
 \leq C\, \Bigg(\prod\limits_{i\in e}\, \max\big\{1,|b_ih_i|^a\big\}\, \min\big\{1,|b_ih_i|^{m_i}\big\}
 \Bigg)\,  P_{b,a} f(x)
 \end{equation*}
 holds for all $x\in \R$. 
 \end{lemma}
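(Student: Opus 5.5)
The plan is to reduce Lemma \ref{ddim-1} to the one-dimensional Lemma \ref{onedim}, applied successively in each direction $i\in e$, and then to combine the resulting one-dimensional Peetre suprema into the product-weight maximal function $P_{b,a}f$. Since $\Delta^{m,e}_h=\prod_{i\in e}\Delta^{m_i}_{h_i,i}$ and the operators in different directions commute, we may handle one coordinate at a time. Write $e=\{i_1,\ldots,i_n\}$. For fixed $x_j$, $j\neq i_1$, the function $t\mapsto g(t):=\prod_{i\in e\setminus\{i_1\}}\Delta^{m_i}_{h_i,i}f(x_1,\ldots,t,\ldots,x_d)$ (with $t$ in slot $i_1$) is a one-dimensional tempered distribution. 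Its Fourier transform is supported in $[-b_{i_1},b_{i_1}]$: differences are Fourier multipliers, so the support of $\gf f$ in $Q_b$ is preserved, and restricting to a line gives a partial Fourier support in the corresponding interval. Lemma \ref{onedim} then yields
\[
|\Delta^{m_{i_1}}_{h_{i_1},i_1}g(x_{i_1}+k_{i_1}h_{i_1})|\le C\max(1,|b_{i_1}h_{i_1}|^a)\min(1,|b_{i_1}h_{i_1}|^{m_{i_1}})\sup_{y_{i_1}\in\re}\frac{|g(y_{i_1})|}{(1+|b_{i_1}(x_{i_1}-y_{i_1})|)^a}.
\]

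Next we iterate. Inside the supremum we have the remaining differences in directions $i_2,\ldots,i_n$, evaluated at a point whose $i_1$-coordinate is $y_{i_1}$ and whose other coordinates are $x_j+k_jh_j$. For each fixed $y_{i_1}$ we apply Lemma \ref{onedim} again in direction $i_2$, which produces the factor for $i_2$ and a supremum over $y_{i_2}$ with weight $(1+|b_{i_2}(x_{i_2}-y_{i_2})|)^{-a}$. The constant is uniform in $y_{i_1}$, so the supremum over $y_{i_1}$ can be pulled outside. After $n$ steps we obtain the product of the factors $\max\{1,|b_ih_i|^a\}\min\{1,|b_ih_i|^{m_i}\}$ over $i\in e$, times
\[
\sup_{y_i,\,i\in e}\frac{|f(\tilde y)|}{\prod_{i\in e}(1+|b_i(x_i-y_i)|)^a},
\]
where $\tilde y$ has coordinates $y_i$ for $i\in e$ and $x_j$ for $j\notin e$. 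This quantity is at most $P_{b,a}f(x)$, because we may take $y_j=x_j$ for $j\notin e$, and then the missing weights equal $1$.

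The only delicate point is justifying the one-dimensional application. We need the restricted function $g$ to be a genuine function of one variable with compact Fourier support in $[-b_i,b_i]$, and to be pointwise defined. This holds because $\gf f$ has compact support, so by Paley--Wiener--Schwartz $f$ is an entire function of exponential type $b_i$ in each variable and of polynomial growth. Its restriction to any coordinate line is therefore entire of exponential type $b_i$ and tempered, hence its one-dimensional Fourier transform is supported in $[-b_i,b_i]$. Lemma \ref{onedim} applies to it (the Peetre bound used there, via the Remark after Lemma \ref{maxunglem}, is valid for such functions). The constants depend only on $m$, $k$, $a$ and $d$, not on $f$, $b$, $x$ or $h$.
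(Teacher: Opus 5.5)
Your proposal is correct and is essentially the paper's argument. The paper proves Lemma \ref{ddim-1} in one line by applying Lemma \ref{onedim} iteratively in the directions $i\in e$; your Paley--Wiener--Schwartz justification that the restrictions to coordinate lines have Fourier support in $[-b_i,b_i]$, together with pulling the suprema over already treated variables outside, fills in exactly what the paper leaves implicit.

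The only caveat, shared with the paper, is that your last step tacitly assumes $k_jh_j=0$ for $j\notin e$, and your $g$ should carry the shifted coordinates $x_j+k_jh_j$ for $j\in e\setminus\{i_1\}$. This is the reading under which the lemma holds, since otherwise the missing weights $(1+|b_jk_jh_j|)^{a}$, $j\notin e$, cannot be absorbed; it is also exactly how the lemma is used in Step~3 of the main proof.
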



\section{Results and proofs}\label{proofs}
For a quasi-Banach space $X$ of functions  we shall call a function $g$ a pointwise multiplier
if $g \, \cdot \, f \in X$ for all $f \in X$.
By $M(X)$ we denote the set of all pointwise multipliers for $X$, i.e.,
\[
M(X):= \big\{g:~ g  \cdot   f \in X \quad \forall f\in X\big\}
\]
We call $X$ an algebra with respect to pointwise multiplication 
(for short a multiplication algebra) if $f  \cdot  g \in X$ for all $f,g\in X$ and there exist a constant $C>0$ such that 
\begin{equation*}
\| f\cdot g\,|\, X\| \leq C\|\,f\,|\,X\| \cdot \|\, g\,|\,X\|
\end{equation*}
holds for all $f,g\in X$. It is obvious that if $X$ is a multiplication algebra we have, $X\subset M(X)$.

Let us recall the algebra property of the Sobolev spaces with mixed smoothness. 
\begin{theorem}\label{need}
	Let $m\in \N$ and $1\leq p<\infty$. Then the space $S^m_pW(\R)$ is a multiplication algebra.
\end{theorem}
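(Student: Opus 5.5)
The proof of Theorem \ref{need} will use the Leibniz rule together with a mixed Sobolev embedding into $L_\infty$. For $f,g\in S^m_pW(\R)$ and a multi-index $\alpha$ with $|\alpha|_\infty\le m$, the product rule for distributional derivatives gives
\[
D^\alpha(fg)=\sum_{\beta\le \alpha}\binom{\alpha}{\beta} D^\beta f\, D^{\alpha-\beta}g .
\]
We first justify this for $f,g\in C_0^\infty(\R)$ and then pass to general elements by density, which holds since $p<\infty$. It therefore suffices to prove
\[
\|D^\beta f\cdot D^{\gamma}g\,|L_p(\R)\|\le C\|f|S^m_pW(\R)\|\,\|g|S^m_pW(\R)\|
\]
for all $\beta,\gamma$ with $\beta+\gamma\le(m,\ldots,m)$ componentwise.

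The key tool is the following \emph{coordinatewise splitting} argument. Fix $\beta,\gamma$ and put
\[
e:=\{i\in[d]:\ \beta_i<m\}\supset\{i:\ \gamma_i=0\}.
\]
Because $\beta_i+\gamma_i\le m$ and $m\ge 1$, every index $i$ satisfies either $\beta_i\le m-1$ or $\gamma_i\le m-1$. So for each coordinate we can choose which factor still has one derivative to spare in that direction. Let $e_1$ be the set of coordinates where we spend the spare derivative on $f$, and $e_2=[d]\setminus e_1$ the coordinates where we spend it on $g$, choosing so that $\beta_i\le m-1$ for $i\in e_1$ and $\gamma_i\le m-1$ for $i\in e_2$.

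In one variable we have $\|u\|_{L_\infty(\re)}\le \|u\|_{L_1}+\|u'\|_{L_1}$, and $W^1_1(\re)\hookrightarrow L_\infty(\re)$. Applying this in the variables of $e_1$, with $x''$ denoting the remaining variables, gives
\[
\sup_{x_{e_1}}|D^\beta f(x)|\le \sum_{\delta\le \mathbf 1_{e_1}}\int|D^{\beta+\delta}f(y_{e_1},x'')|\,dy_{e_1}.
\]
For $p=1$ this already suffices: $\|D^\beta f\,D^\gamma g\|_{L_1}$ is bounded by integrating $\sup_{x_{e_1}}|D^\beta f|\cdot\sup_{x_{e_2}}|D^\gamma g|$ over all variables, and a Fubini argument splits the result into a product of $L_1$ norms of mixed derivatives of order at most $m$ in each direction.

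For $1<p<\infty$ we use the one-dimensional bound
\[
\|u\|_{L_\infty}^p\le C\,(\|u\|_{L_p}^p+\|u'\|_{L_p}^p),
\]
applied to $u=D^\beta f$ in the $e_1$-variables and to $D^\gamma g$ in the $e_2$-variables. This is the same argument as for $p=1$, using $|u|^p\in W^1_1$ and Hölder's inequality. We then write
\[
\int|D^\beta f|^p|D^\gamma g|^p\,dx\le\int \Big(\sup_{x_{e_1}}|D^\beta f|^p\Big)\Big(\sup_{x_{e_2}}|D^\gamma g|^p\Big)dx,
\]
where the first factor depends only on $x_{e_2}$ and the second only on $x_{e_1}$. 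The integral therefore factorizes as
\[
\Big(\int \sup_{x_{e_1}}|D^\beta f|^p\,dx_{e_2}\Big)\Big(\int\sup_{x_{e_2}}|D^\gamma g|^p\,dx_{e_1}\Big).
\]
Each factor is bounded by the $p$-th power of a sum of $L_p$ norms of $D^{\beta+\delta}f$ with $\delta\le\mathbf 1_{e_1}$, respectively $D^{\gamma+\delta}g$ with $\delta\le\mathbf 1_{e_2}$. By construction these multi-indices have all components at most $m$.

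The main obstacle is making the iterated coordinatewise embedding rigorous for merely Sobolev functions, in particular the measurability and the absolute continuity of the sections. We handle this by proving all estimates for $C_0^\infty$ functions and closing by density. Convergence of products holds since, by the estimates themselves, $(f,g)\mapsto fg$ is continuous and bilinear from $S^m_pW\times S^m_pW$ into $S^m_pW$ on the dense subspace, so it extends uniquely. We also need to check that the extension coincides with the pointwise product; this follows because $L_p$ convergence implies almost everywhere convergence along a subsequence.
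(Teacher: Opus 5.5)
Your argument is correct and is essentially the route the paper relies on. The paper gives no proof of its own and defers to \cite{NS1701,Ng25}, where the algebra property comes from the Leibniz rule, a splitting of the coordinates so that $D^\beta f$ keeps a spare derivative in one group of variables and $D^\gamma g$ in the complementary group, a partial embedding into bounded functions in each group, and Fubini; this is the same mechanism as Step 4 of the proof of Theorem \ref{main-be}. The only slip is your aside $e\supset\{i:\gamma_i=0\}$, which is backwards (what holds is $[d]\setminus e\subset\{i:\gamma_i=0\}$), but it plays no role in the rest of your argument.
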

The proof of the above theorem can be found in \cite{NS1701,Ng25}. Our    result  with respect to  Triebel--Lizorkin spaces of dominating mixed smoothness reads as follows.

\begin{theorem}\label{main-be}
Let $0<p<\infty$, $0<p\leq q\leq \infty$ and $r >\frac{1}{p}$. Then $S^r_{p,q}F(\R)$ is a multiplication algebra.
\end{theorem}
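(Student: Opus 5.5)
We will use the difference characterization of Proposition \ref{diff}. Since $p\le q$, we have $\min(p,q)=p$, so $r>1/p$ means exactly $r>1/\min(p,q)$. Hence Proposition \ref{diff} applies with some fixed $m\in\N^d$, $m_i>r$, and it suffices to bound $\|fg\,|\,S^r_{p,q}F(\R)\|_{(m)}$ by $\|f\,|\,S^r_{p,q}F(\R)\|\,\|g\,|\,S^r_{p,q}F(\R)\|$.

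\textbf{Step 1: boundedness.} First we show $S^r_{p,q}F(\R)\hookrightarrow L_\infty(\R)$, using $r>1/p$. We embed $S^r_{p,q}F\hookrightarrow S^r_{p,\infty}B\hookrightarrow S^{r-1/p}_{\infty,\infty}B$ by Nikol'skij's inequality on each dyadic rectangle. The factor $2^{|k|_1/p}$ is absorbed because $r-1/p>0$, and the blocks are summable in $L_\infty$.

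\textbf{Step 2: Leibniz rule.} For $e\subset[d]$ we expand $\Delta_h^{m,e}(fg)$ by the Leibniz rule for differences, applied in each direction $i\in e$:
\[
\Delta^{m_i}_{h_i,i}(fg)(x)=\sum_{j=0}^{m_i}\binom{m_i}{j}\Delta^{j}_{h_i,i}f(x)\,\Delta^{m_i-j}_{h_i,i}g(x+jh_ie_i).
\]
This writes $\Delta^{m,e}_h(fg)$ as a finite sum of products $\Delta^{j,u}_h f(x)\cdot\Delta^{m-j,v}_h g(x+j\diamond h)$, where $u,v\subset e$, $u\cup v=e$, and $u\cap v$ is the set of directions in which both factors carry a nontrivial difference of order less than $m_i$. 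The extreme terms, where one factor carries all of $e$ and the other none, are handled by Step 1 together with the norm $\|\cdot\|_{(j)}$ with lower order $j$. This requires lower-order characterizations: we use a Fourier-analytic decomposition instead of Proposition \ref{diff} when $j_i<r$.

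\textbf{Step 3: mixed terms.} Here, instead of using differences directly, we decompose $f=\sum_k f_k$ and $g=\sum_\ell g_\ell$ with $f_k=\cfi[\varphi_k\cf f]$. We estimate $\Delta^{j,u}_h f_k$ and $\Delta^{m-j,v}_h g_\ell$ by Lemma \ref{ddim-1}, which gives factors $\min(1,|2^{k_i}h_i|^{j_i})\max(1,|2^{k_i}h_i|^a)P_{2^k,a}f_k$. The goal is a bound of the form
\[
2^{r|k|_1}\sup_{|h_i|\le 2^{-k_i}}|\Delta^{m,e}_h(fg)|\lesssim \sum_{\text{splittings}} \big(\mathcal{M}_u f\big)\cdot\big(\mathcal{M}_v g\big),
\]
where in the directions of $u\setminus v$ the factor carries the full weight $2^{rk_i}$, and in the common directions the weight is split as $2^{r k_i}=2^{\theta k_i}2^{(r-\theta)k_i}$. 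The remaining coordinates of each factor are controlled by sup norms of partial $L_\infty$ type, i.e.\ mixed $L_\infty$ in some variables and $F$-type in others. We then apply H\"older's inequality in $L_p(\ell_q)$, using $\ell_q\cdot\ell_\infty\subset \ell_q$, followed by Proposition \ref{peetremax}. The condition $a>1/p$ needed there is compatible since $q\ge p$.

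\textbf{Main obstacle.} The main obstacle is the mixed terms, where one factor must be bounded in some coordinates by $\sup_{x_i}$ while keeping $L_p$ in the others, without Nikol'skij's inequality. I would try a directional trace-type estimate: for fixed variables $x_{v^c}$, the function $x_v\mapsto g(x)$ should be bounded in $L_\infty(\re^{|v|})$ uniformly with an $S^r_{p,q}F$-type norm in the remaining variables. This would rest on the one-dimensional inequality $\sup_t|\phi(t)|\lesssim\|\phi\,|\,F^{r}_{p,q}(\re)\|$ for $r>1/p$, applied coordinatewise, combined with Minkowski's inequality, which is valid since $q\ge p$ allows exchanging $\ell_q$ and $L_p$ in the correct direction. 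The restriction $p\le q$ is used precisely at this step.
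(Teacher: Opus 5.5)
\textbf{There is a genuine gap in your Steps 2--3, and it comes from the order of the differences.} Your Step 1 is fine; it is the content of Lemma \ref{emb1}. Your directional trace argument in the last paragraph is also the right mechanism for the cross terms.

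The problem is that you only require $m_i>r$. Then the Leibniz expansion of $\Delta^{m,e}_h(fg)$ contains terms in which, in some direction $i\in u\cap v$, the two factors carry differences of orders $j_i$ and $m_i-j_i$ that are both $\le r$ (e.g.\ $r=3/2$, $m_i=2$, $j_i=1$). In such a direction neither factor can carry the weight $2^{rk_i}$. For smooth $f$ one has $\sup_{|h_i|\le 2^{-k_i}}|\Delta^{j_i}_{h_i,i}f(x)|\approx 2^{-j_ik_i}|\partial_i^{j_i}f(x)|$ as $k_i\to\infty$. So for $j_i<r$ the ``norm $\|\cdot\|_{(j)}$'' you invoke is infinite for every nonzero smooth $f\in L_p(\R)$, and no Fourier-analytic substitute changes this. (The extreme terms $u=e$, $v=\emptyset$ carry the full order $m_i$ and need no lower-order norm.)

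Your remedy $2^{rk_i}=2^{\theta k_i}2^{(r-\theta)k_i}$ forces the factor put in sup in $x_i$ to keep positive smoothness $\theta$ in that same direction. You would then need a hybrid estimate of H\"older--Zygmund type in $x_i$, coupled with the $L_p(\ell_q)$-structure in the remaining variables. Its constraints, e.g.\ $r-(m_i-j_i)<\theta<j_i$ and $\theta\le r-1/p$, can be incompatible for small $p$ unless $m_i$ is large. The proposal does not establish such an estimate, and your trace argument as formulated only produces sup bounds. The decisive bound of Step 3 is stated as a goal, not proved.

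\textbf{The missing idea is to choose the order so that this situation never occurs.} The paper uses differences of order $2\overline{m}$ with $r<m\le r+1$, together with the formula $\Delta^{2\overline{m},e}_h(fg)(x)=\sum_{u}\binom{2\overline{m}}{u}\Delta^{2\overline{m}-u,e}_hf(x+u\diamond h)\,\Delta^{u,e}_hg(x)$. In every direction $i\in e$ one has $u_i\ge m$ or $2m-u_i\ge m$. So the whole weight $2^{rk_i}$ always goes to a factor of order $\ge m>r$, and the other factor only has to be bounded, with no smoothness, in that direction.

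If one factor has high order in all directions of $e$, the other is estimated by its norm in $C(\R)$. The shift $u\diamond h$ is removed by writing $f=\sum_{\ell}f_{k+\ell}$, applying Lemma \ref{ddim-1} with $1/p<a<r$, and then Proposition \ref{peetremax}.

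If the high-order directions are shared between $f$ and $g$, the paper uses your trace idea. The sup of $f$ is taken only in the variables $x_i$, $i\in e_2$, where $g$ carries the high-order difference. The sup of $g$ is taken only in the complementary variables. Both use Lemma \ref{trace} and the embedding into $C(\re^L)$. Minkowski's inequality for $q\ge p$ then moves the $\ell_q$-sum inside the $L_p(\re^L)$-integrals, and the $L_p(\R)$-norm of the product factorizes into two difference norms.

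With this choice you need neither a block decomposition of $g$ nor a splitting of the weights, and your trace idea completes the argument.
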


Proof of the above theorem needs the following lemmas which can be found in \cite[Theorem 2.4.2]{ST87B} and \cite{Vy06,HV09}. 
By $C(\R)$ we denote the collection of all uniformly continuous and 
bounded functions $f: ~ \R \to \com$, equipped with the sup-norm.




\begin{lemma}\label{emb1}
	Let $0<p<\infty$, $0< q\leq \infty$ and $r \in \re$. Then $S^r_{p,q}F(\R)$ is continuously embedded into $C(\R)$ if and only if 
	\begin{itemize}
		\item either  $r>1/p$
		\item or $0< p \leq 1$, $r=1/p$.
	\end{itemize}  
\end{lemma}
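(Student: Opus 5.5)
The plan is to prove sufficiency by splitting $f$ into its dyadic blocks $f_k:=\cfi[\varphi_k\cf f]$, $k\in\N_0^d$, and showing that $\sum_k f_k$ converges uniformly. Each $f_k$ is an entire analytic function of exponential type with $\cf f_k$ supported in a box of side lengths about $2^{k_i}$. So $f_k\in C(\R)$ as soon as $f_k\in L_p(\R)$, and $C(\R)$ is closed under uniform limits. The main tool is the anisotropic (tensor) Nikol'skij inequality
\[
\|f_k|L_\infty(\R)\|\le C\,2^{|k|_1/p}\|f_k|L_p(\R)\|,
\]
which follows from the Remark after Lemma \ref{maxunglem}: rescale coordinatewise by $b=2^{k}$ and use $P_{b,a}f_k(x)\ge |f_k(x)|$, combined with the standard estimate of $\sup_x|f_k(x)|$ by an $L_p$ average over a box of size $2^{-k}$.

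For the case $r>1/p$ the argument is direct. Trivially $\|f_k|L_p\|\le 2^{-r|k|_1}\|f|S^r_{p,q}F(\R)\|$, since the $\ell_q$-sum dominates each of its terms pointwise. Hence
\[
\sum_{k\in\N_0^d}\|f_k|L_\infty\|\le C\sum_{k\in\N_0^d}2^{-(r-1/p)|k|_1}\,\|f|S^r_{p,q}F(\R)\|.
\]
The right-hand sum is finite because it equals $\big(\sum_{j\ge0}2^{-(r-1/p)j}\big)^d$. Therefore $\sum_k f_k$ converges absolutely in $C(\R)$. Its limit coincides with $f$ in $\cs'(\R)$, which gives the embedding together with the norm bound.

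The endpoint case $0<p\le1$, $r=1/p$ needs more care. If $q\le p$, I would use the $p$-triangle inequality $\sum_k a_k\le(\sum_k a_k^p)^{1/p}$ and Nikol'skij to get
\[
\sum_k\|f_k|L_\infty\|\lesssim\Big(\sum_k 2^{|k|_1}\|f_k|L_p\|^p\Big)^{1/p}=\|f|S^{1/p}_{p,p}F(\R)\|\lesssim\|f|S^{1/p}_{p,q}F(\R)\|.
\]
For $q>p$ this elementary route fails, since $S^{1/p}_{p,q}F$ is not contained in $S^{1/p}_{p,p}F$. There I would prove a mixed Jawerth--Franke type embedding $S^{1/p}_{p,q}F(\R)\hookrightarrow S^0_{\infty,p}B(\R)$, i.e.\ $(\sum_k\|f_k|L_\infty\|^p)^{1/p}\lesssim\|f|S^{1/p}_{p,q}F\|$, after which the $p$-triangle inequality again gives uniform convergence. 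I would attempt this via a distribution-function (level set) argument on $\sup_k 2^{|k|_1/p}|f_k|$ combined with Peetre's maximal inequality, Proposition \ref{peetremax}, applied in a product-structured way. I expect this endpoint embedding in the dominating mixed setting to be the \emph{main obstacle}. If a direct proof is too long, I would quote the known mixed-smoothness version from \cite{Vy06,HV09}.

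For necessity I would reduce to $d=1$ via tensor products. For $f=g_1\otimes\cdots\otimes g_d$ the blocks factorize, $f_k(x)=\prod_i (g_i)_{k_i}(x_i)$. Consequently
\[
\sum_k 2^{r|k|_1q}|f_k(x)|^q=\prod_i\sum_{k_i}2^{rk_iq}|(g_i)_{k_i}(x_i)|^q,
\]
and Fubini gives $\|f|S^r_{p,q}F(\R)\|=\prod_i\|g_i|F^r_{p,q}(\re)\|$. Take $g_1\in F^r_{p,q}(\re)$ unbounded, which exists by the classical one-dimensional result when $r<1/p$, or when $r=1/p$ and $p>1$. Take $g_2=\dots=g_d$ a fixed Schwartz function with $g_2(0)\ne0$. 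Then $f\in S^r_{p,q}F(\R)$ but $f$ is unbounded, so the embedding fails outside the two stated ranges.
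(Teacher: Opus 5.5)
Your proposal is correct. It necessarily follows a different route from the paper, because the paper gives no argument for Lemma \ref{emb1}: it only quotes \cite[Theorem 2.4.2]{ST87B}, \cite{Vy06} and \cite{HV09}. You prove the elementary parts with the tools of Section \ref{sec-tools} and isolate the one hard ingredient.

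\emph{Sufficiency.} For $r>1/p$, and for $r=1/p$, $0<p\le 1$, $q\le p$, your argument is a complete proof. It combines the Nikol'skij inequality $\|f_k|L_\infty(\R)\|\lesssim 2^{|k|_1/p}\|f_k|L_p(\R)\|$ with absolute (respectively $p$-triangle) summation of the dyadic blocks, and with identification of the uniform limit with $f$ in $\cs'(\R)$. For $r=1/p$, $p\le 1$, $q>p$, everything rests on the mixed Jawerth embedding $S^{1/p}_{p,q}F(\R)\hookrightarrow S^{0}_{\infty,p}B(\R)$, i.e. the case $p_1=\infty$ of the result of \cite{HV09}. Quoting it puts you at exactly the paper's level of reliance on the literature. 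Carrying out your level-set/Peetre sketch yourself would be the real work, and it is the content of a separate paper.

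\emph{Necessity.} Your tensor-product argument is a genuine simplification. Since $\varphi_k$ is a tensor product, the quasi-norm of $g_1\otimes\cdots\otimes g_d$ factorizes exactly, so the classical one-dimensional counterexamples transfer immediately. This is more transparent than invoking a multivariate theorem.

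\emph{One small correction.} The input from the Remark after Lemma \ref{maxunglem} that yields the Nikol'skij inequality is not the trivial inequality $P_{2^k,a}f_k\ge|f_k|$. It is the derivative bound $P_{2^k,a}(\partial_i f_k)\lesssim 2^{k_i}P_{2^k,a}f_k$, used in a mean-value estimate over boxes of side lengths $\delta 2^{-k_i}$. The resulting error term $C\delta\,\|f_k|L_\infty(\R)\|$ is then absorbed using a priori boundedness of $f_k$. Alternatively, simply cite the Plancherel--P\'olya--Nikol'skij inequality.
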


\begin{lemma}
	\label{trace} Let $0< p< \infty$, $0<q\leq \infty$ and $r>1/p$. 
	Let further $L\in \N$ and $ L\leq d$. 
	If    $f \in S^r_{p,q}F(\R)$ then the function 
	\[
	g(x_1,\ldots,x_L):=f(x_1,\ldots,x_L,x_{L+1},\ldots,x_d)
	\] of the $L$ variables  $x_1,\ldots, x_L$ ($x_{L+1},\ldots,x_d$ are considered as fixed) belongs to the space $ S^r_{p,q}F(\re^L)$.
\end{lemma}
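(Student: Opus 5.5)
The plan is to work directly with the Fourier-analytic quasi-norm of Definition \ref{eq-def1} and its dyadic pieces. By Lemma \ref{emb1}, $f$ is continuous, so the restriction $g(x')=f(x',x'')$ is well defined for every fixed $x''$. Here I write $x=(x',x'')$ with $x'\in\re^L$, $x''\in\re^{d-L}$, and similarly $k=(k',k'')$.

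\emph{Step 1: decomposition.} Put $f_k:=\cfi[\varphi_k\cf f]$, so $f=\sum_{k}f_k$, with uniform convergence since $r>1/p$. For fixed $x''$ and $k'\in\N_0^L$ define
\[
g_{k'}(x'):=\sum_{k''\in\N_0^{d-L}} f_{(k',k'')}(x',x'').
\]
Because $\varphi_k$ is a tensor product, the partial Fourier transform in $x'$ of each $f_{(k',k'')}(\cdot,x'')$ is supported in $\{|\xi_i|\le 3\cdot 2^{k_i-1},\ i\le L\}$. Hence $g=\sum_{k'}g_{k'}$ is a decomposition into pieces with spectra in dyadic rectangles $[-c2^{k_1},c2^{k_1}]\times\cdots$. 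I will then use the standard characterization of $S^r_{p,q}F(\re^L)$ by such rectangle-supported decompositions (as in \cite{ST87B}). This is valid when $r>\big(\frac1{\min(p,q)}-1\big)_+$, which holds here. It gives
\[
\|g|S^r_{p,q}F(\re^L)\|\lesssim \Big\|\Big\{\sum_{k'}2^{r|k'|_1q}|g_{k'}|^q\Big\}^{1/q}\Big|L_p(\re^L)\Big\|.
\]

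\emph{Step 2: removing the frozen variables.} This is where $r>1/p$ enters. For $a>1/\min(p,q)$ and any $y''$ with $|x_i-y_i|\le 2^{-k_i}$ for $i>L$, the definition of the Peetre maximal function gives
\[
|f_k(x',x'')|\le C\,P_{2^k,a}f_k(x',y'').
\]
Raising to the power $p$ and averaging over this box $B_{k''}(x'')$ of volume $2^{-|k''|_1}$ yields
\[
|f_k(x',x'')|\le C\,2^{|k''|_1/p}\,\big\|P_{2^k,a}f_k(x',\cdot)\,\big|\,L_p(\re^{d-L})\big\|.
\]
Fix $0<\delta<r-1/p$ and apply the $u$-triangle inequality with $u=\min(1,p,q)$ to the sum over $k''$. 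Writing $2^{r|k'|_1}=2^{r|k|_1}2^{-r|k''|_1}$, one obtains
\[
\Big\|\Big\{\sum_{k'}2^{r|k'|_1q}|g_{k'}|^q\Big\}^{1/q}\Big|L_p\Big\|^u
\lesssim \sum_{k''}2^{-u(r-1/p)|k''|_1}
\Big\|\Big\{\sum_{k'}\big(2^{r|k|_1}\|P_{2^k,a}f_k(x',\cdot)|L_p(\re^{d-L})\|\big)^q\Big\}^{1/q}\Big|L_p(\re^L)\Big\|^u .
\]

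\emph{Step 3: Minkowski and the Peetre inequality.} For fixed $k''$ I move the inner $L_p(dx'')$ norm outside the $\ell_q(k')$ sum. With $q\ge p$ this is Minkowski's inequality in $L_{q/p}$, i.e. the bound $\|\cdot\|_{\ell_{q/p}(L_1)}\le\|\cdot\|_{L_1(\ell_{q/p})}$. Next I enlarge the $\ell_q$ sum from $k'$ to all $k\in\N_0^d$ and apply Proposition \ref{peetremax}. Each term is then bounded by $\|f|S^r_{p,q}F(\R)\|$, and the geometric factor $2^{-u(r-1/p)|k''|_1}$ is summable over $k''$. This gives
\[
\|g|S^r_{p,q}F(\re^L)\|\le C\|f|S^r_{p,q}F(\R)\|
\]
uniformly in $x''$, which is more than the lemma claims.

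\emph{Main obstacle.} The Minkowski step in Step 3 is the delicate point. It works exactly when $q\ge p$, which is the range needed for Theorem \ref{main-be}. For $q<p$, I would first try to avoid mixing the norms. One option is to keep a pointwise-in-$x''$ bound through a vector-valued Fefferman--Stein maximal estimate in the $x''$-directions. The other option is to exploit the slack $r-1/p>0$ via an embedding into $S^{r-\varepsilon}_{p,p}F$ on the $x''$-side only. This case is the part I expect to require extra care.
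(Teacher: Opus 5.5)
Your argument is correct for $q\ge p$, which is all that Theorem \ref{main-be} uses. But Lemma \ref{trace} is stated for all $0<q\le\infty$, and for $q<p$ your proposal has a genuine gap, which you flag but do not close. Two steps break there.

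In Step 3, Minkowski's inequality goes the wrong way when $q<p$. With $F_k:=2^{r|k|_1}P_{2^k,a}f_k$ one then has $\|\{\|F_k(x',\cdot)|L_p(\re^{d-L})\|\}_{k'}|\ell_q\|\ge\|\,\|\{F_k(x',\cdot)\}_{k'}|\ell_q\|\,|L_p(\re^{d-L})\|$. The left side is what Step 2 produces; only the right side is controlled by Proposition \ref{peetremax}, and the left side is in general not bounded by it.

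In Step 1, your claim that $r>(1/\min(p,q)-1)_+$ follows from $r>1/p$ is false for $q<p$ (e.g.\ $p=2$, $q=1/4$, $r=1$). So the reconstruction theorem for rectangle-supported pieces is not available.

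Both defects can be repaired inside your own framework.

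For Step 1, do not discard the lower frequency bounds. The partial spectrum of $g_{k'}$ lies in $\supp\varphi_{k'}$, a product of dyadic annuli. Hence $\varphi_{j'}\cf g=\sum_{|k'-j'|_\infty\le 1}\varphi_{j'}\cf g_{k'}$, and $\|g|S^r_{p,q}F(\re^L)\|\lesssim\|\{2^{r|k'|_1}g_{k'}\}|L_p(\ell_q)\|$ follows from Proposition \ref{peetremax} for every $r\in\re$.

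For Step 2, note that the box $B_{k''}(x'')$ depends only on $k''$. So $|f_{(k',k'')}(x',x'')|\le C\,P_{2^k,a}f_{(k',k'')}(x',y'')$ holds for all $k'$ simultaneously whenever $y''\in B_{k''}(x'')$. Take the $\ell_q(k')$-sum first, then average its $p$-th power over $y''\in B_{k''}(x'')$, and finally integrate in $x'$. This gives
\[
\Big\|\Big\{\sum_{k'\in\N_0^L}2^{r|k'|_1q}|f_{(k',k'')}(\cdot,x'')|^q\Big\}^{1/q}\Big|L_p(\re^L)\Big\|
\lesssim 2^{-(r-\frac1p)|k''|_1}\Big\|\Big\{\sum_{k\in\N_0^d}\big(2^{r|k|_1}P_{2^k,a}f_k\big)^q\Big\}^{1/q}\Big|L_p(\R)\Big\|,
\]
with no Minkowski step and no relation needed between $p$ and $q$. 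Your $u$-triangle inequality over $k''$ and Proposition \ref{peetremax} (with $a>1/\min(p,q)$) then finish the proof for all $q$.

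The paper gives no proof of this lemma; it only cites the trace results of \cite{ST87B,Vy06,HV09}. Once repaired, your argument is a self-contained Fourier-analytic alternative, and it even gives a bound uniform in $x''$.
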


Now we are in the position to prove Theorem \ref{main-be}.
\begin{proof}Let $m\in \N$ such that $r <m\leq r+1$. We denote
$$
\overline{m}=(m,\ldots,m) \in \N^d.
$$
Since the norm in Proposition \ref{diff} 
does not depend on the order of difference whose components are larger than $r$, in the sense of equivalent norms, we shall prove that  
\begin{equation*}
     \|\, f g\,|\,S^r_{p,q} F(\R)\|_{(2\overline{m})} \leq C \,  \| \, f\, |S^r_{p,q}F(\R) \| \cdot  \|\,  g \, |S^r_{p,q}F(\R) \|
\end{equation*} 
holds for all $f,g\in S^r_{p,q}F(\R)$ with the positive constant $C$ independent of $f$ and $g$. 

In this proof, when we write $A\lesssim B$ we mean that there is a  positive constant $C$ which may depend on parameters $d$, $r$, $m$, $p$ and $q$ but is independent of $f$ and $g$ such that $A\le C\cdot B$. 

\noindent
{\it Step 1.} Taking Lemma \ref{emb1} into account  we obtain that
\begin{equation*}
\| f g\,|L_p(\R)\| \, \leq\, \| f|L_p(\R)\|\cdot \| g|C(\R)\|\, \lesssim    \| f|S^r_{p,q}F(\R)\|\cdot \|g|S^r_{p,q}F(\R)\|.
\end{equation*}  
This inequality should  be interpreted as the estimate needed for the term with $e= \emptyset$.
Next we need some identities for differences.
Note that if $\psi,\, \phi:\ \re \to \com$ and $m \in \N$ we have
\be\label{formular}
\Delta_h^{m}(\psi \phi)(t)=\sum_{j=0}^{m} \binom{m}{j} \, \Delta_h^{m-j}\psi(t+j h)\, \Delta_h^{j}\phi(t),\qquad t,h\in \re \, ,
\ee 
which can be proved by induction on $m$.
Let $e\subset [d]$, $e\not=\emptyset$ and recall the notation 
\[
x \diamond y = (x_1y_1, \ldots , x_d y_d)\in \R\, 
\]
and
\[
\N_0^d(e) = \big\{k\in \N_0^d: \ k_i=0\ \text{if}\ i\not \in e\big\}.
\]
Then we derive from \eqref{formular} that
\begin{equation*}\label{mot}
\Delta_{h}^{2{\overline{m}},e}(f  g)(x)=\sum_{ u \in \N_0^d(e)\atop |u|_\infty\leq 2m} \binom{2\overline{m}}{u}\, 
\Delta_{h}^{2 \overline{m} - u ,e}f(x+ u  \diamond h)\, \Delta_{h}^{ u ,e}g(x)\,,\quad\, x, h\in \R\, , 
\end{equation*}
holds. Here $2\overline{m}-u :=(2m-u_1, \ldots ,2m-u_d)$ and 
\[
\binom{2\overline{m}}{u} = \prod_{i \in e} \binom{2m}{u_i} \, .
\]
Hence
$$
\|fg | S^r_{p,q}F(\R)\|_{(2\overline{m})} \lesssim \Bigg(
\| f|S^r_{p,q}F(\R)\|\cdot \|g|S^r_{p,q}F(\R)\|+\sum_{e\subset [d], e\not=\emptyset} \sum_{ u \in \N_0^d(e)\atop |u|_\infty\leq 2m}S_{e,u}\Bigg),
$$
where
\begin{equation*}\label{ws-117}
S_{e,u} :=\Bigg\|\Bigg\{\sum\limits_{k \in \N_0^d(e)} 2^{r|k|_1 q}\Bigg(\sup_{|h_i| < 2^{-k_i}\atop i \in e} 
\big| \Delta_{h}^{2 \overline{m} - u ,e}f(\cdot+ u  \diamond h)\Delta_{h}^{ u ,e}g(\cdot)\big|\Bigg)^q\Bigg\}^{1/q}\Bigg|L_p(\R) \Bigg\|.
\end{equation*}
The main step of the proof will consist  in the  estimates of the terms $S_{e,u}$. 
Therefore we have to  consider different cases. Without loss of generality we may assume that $$e=\{1, \ldots ,N\}$$ for some natural number $N$, $N \le d$. 
\\
{\it Step 2.} The case $u_i\geq m$ for all $ i\in e$. 
We have
\begin{equation*}
	\begin{aligned}
 \big| \Delta_{h}^{2\overline{m} -u ,e}f(x+u  \diamond h)\Delta_{h}^{u ,e}g(x)\big|
&\le     \sup_{x\in \R}\big | \Delta_{h}^{2\overline{m} -u ,e}f(x+u  \diamond h)\big| \cdot   |\Delta_{h}^{u ,e}g(x)|\\
& \lesssim  \| f |C(\R)\|\cdot  \big | \Delta_{h}^{u ,e}g(x) \big |\,. 		
	\end{aligned}
\end{equation*}
The embedding $ S^r_{p,q}F(\R) \hookrightarrow C(\R)$, see Lemma \ref{emb1},    implies 
\begin{equation*}
	\begin{aligned}
 \big| \Delta_{h}^{2\overline{m} -u ,e}f(x+u  \diamond h)\Delta_{h}^{u ,e}g(x)\big| 
 \lesssim \big\|f |S^r_{p,q}F(\R)\big\|\cdot   \big| \Delta_{h}^{u ,e}g(x) \big|.
	\end{aligned}
\end{equation*}
Consequently we have 
\begin{equation*}
\begin{aligned}
	 S_{e,u}
	 & \lesssim \big\|f \big|S^r_{p,q}F(\R)\big\|  \cdot  \Bigg\|\Bigg\{\sum\limits_{k  \in \N_0^d(e)} 2^{r|k |_1 q}\, \sup_{|h_i| < 2^{-k_i}\atop i \in e}  \big|\Delta_{h}^{u ,e}g(\cdot) \big|^{q} \Bigg\}^{1/q}\bigg|L_p(\R)  \Bigg\| 
	 \\
	 	 & \lesssim \big\|f \big|S^r_{p,q}
	 F(\R)\big\|\cdot  \big\|g \big|S^r_{p,q}F(\R)\big\|_{(u)}
	 	 \\
	 & \lesssim \big\|f \big|S^r_{p,q}
	 F(\R)\big\|\cdot  \big\|g \big|S^r_{p,q}F(\R)\big\|\, .
\end{aligned}
\end{equation*}
{\it Step 3.} The case $u_i \le m$ for all $i\in e$. Obviously we have  $2m-u_i \ge m $ for all $i \in e$. In this case we have
\begin{equation*}
	\begin{aligned}
		\big| \Delta_{h}^{2\overline{m} -u ,e}f(x+u  \diamond h)\Delta_{h}^{u ,e}g(x)\big|
		&\le     \big | \Delta_{h}^{2\overline{m} -u ,e}f(x+u  \diamond h)\big| \,   \sup_{x\in \R}|\Delta_{h}^{u ,e}g(x)|\\
		& \lesssim \| g |C(\R)\|\cdot  \big | \Delta_{h}^{2\overline{m} -u ,e}f(x+u  \diamond h)\big|
		\\
		& \lesssim \| g |S^r_{p,q}F(\R)\|\cdot  \big | \Delta_{h}^{2\overline{m} -u ,e}f(x+u  \diamond h)\big| 
	\end{aligned}
\end{equation*}
which implies
\begin{equation*}
	\begin{aligned}
		S_{e,u}
		& \lesssim \big\|g \big|S^r_{p,q}F(\R)\big\|  \cdot \Bigg\|\Bigg\{\sum\limits_{k  \in \N_0^d(e)} 2^{r|k |_1 q}  \sup_{|h_i|<2^{-k_i}\atop i\in e} \big | \Delta_{h}^{2\overline{m} -u ,e}f(\cdot+u  \diamond h)\big|^q \Bigg\}^{1/q} \Bigg| L_p(\R) \Bigg\|.
	\end{aligned}
\end{equation*}
Denote  the second factor on the right-hand side by $F_{e,u}$. 
Next we apply  the tensor product system $(\varphi_{j})_{j\in \N_0^d}$, defined in \eqref{ws-101}.  
We shall use the convention that in the univariate case $\varphi_n \equiv 0$ if $n< 0$, which implies that 
$\varphi_{(j_1, \ldots  , j_d)} \equiv 0$ if $\min_i j_i < 0$. For any $k \in \N_0^d(e)$
this yields 
\[
f(x) = \sum_{ \ell \in \zz^{d}}  \gf^{-1} [ \varphi_{k+ \ell } \gf f](x) =\sum_{\ell\in \Z}f_{k+\ell}(x)
\]
with convergence in $ S^r_{p,q}F(\re^d)$ and therefore in $C(\re^d)$, see Lemma \ref{emb1}. Here we denote
$$
f_{k+\ell}(x)=\gf^{-1} [ \varphi_{k+ \ell } \gf f](x)\,.
$$
Then we get
\[
\Delta_{h}^{2\overline{m} -u ,e}f(x+u\diamond h) =  \sum_{\ell\in \Z}\Delta_{h}^{2\overline{m} -u ,e}f_{k+\ell}(x+u\diamond h).
\]
It is easily seen that
\begin{equation}\label{eq-sum-01}
\Delta_{h}^{2\overline{m} -u ,e}f_{k+\ell}(x+u  \diamond h) = \sum_{\eta\in U}C_\eta\Delta_{h}^{\overline{m} ,e}f_{k+\ell}(x_1+\eta_1h_1,\ldots,x_N+\eta_N h_N,x_{N+1},\ldots,x_d)	 
\end{equation}
for some finite set $U\subset \N_0^N$ and $C_\eta\in \re$. 
We need one more notation. For $\ell \in \Z$ we put
\[
\omega (\ell):= \big\{i \in \{1, \ldots , d\}:~ \ell_i <0 \big\} \qquad \mbox{and}\qquad 
\overline{\omega} (\ell):= \big\{i \in \{1, \ldots , d\}:~ \ell_i \ge 0 \big\}\, .
\]
It is easily seen that
$$
\omega(\ell)\subset e\ \ \ \text{and}\ \ \ e_0=[d]\backslash e\subset \overline{\omega}(\ell).
$$
By writing 
\[
\Delta_{h}^{  \overline{m}  ,e} = \Bigg(\prod_{i \in \overline{\omega} (\ell) \cap e} 
\Delta_{h_i}^{  m }\Bigg) \Bigg(\prod_{i \in \omega (\ell)   } \Delta_{h_i}^{  m  }\Bigg)\, ,
\] with $\frac{1}{p}<a<r$ we get from Lemma \ref{ddim-1}
$$
\big|\Delta_{h}^{\overline{m} ,e}f_{k+\ell}(x_1+\eta_1h_1,\ldots,x_N+\eta_N h_N,x_{N+1},\ldots,x_d) \big|\lesssim \prod_{i\in \omega(\ell)} 2^{\ell_im} \prod_{i\in \overline{\omega}(\ell)\cap e} 2^{\ell_i a} P_{2^{k+\ell},a}f_{k+\ell}(x)
$$
and therefore
$$
\big|\Delta_{h}^{2\overline{m} -u ,e}f_{k+\ell}(x+u  \diamond h) \big|\lesssim \prod_{i\in \omega(\ell)} 2^{\ell_im} \prod_{i\in \overline{\omega}(\ell)\cap e} 2^{\ell_i a} P_{2^{k+\ell},a}f_{k+\ell}(x).
$$
Inserting this into \eqref{eq-sum-01} we can estimate $F_{e,u}$ as follows
\begin{equation*}
	\begin{aligned}
 F_{e,u}  & \lesssim \Bigg\|\Bigg\{\sum\limits_{k  \in \N_0^d(e)} \Bigg(\sum_{\ell\in \Z}2^{r|k |_1  } \prod_{i\in \omega(\ell)} 2^{\ell_im } \prod_{i\in \overline{\omega}(\ell)\cap e} 2^{\ell_i a } P_{2^{k+\ell},a}f_{k+\ell}(\cdot)\Bigg)^q \Bigg\}^{1/q} \Bigg| L_p(\R) \Bigg\|
 \\
&\lesssim \Bigg\|\Bigg\{\sum\limits_{k  \in \N_0^d(e)} 2^{r|k+\ell |_1 q }\Bigg(\sum_{\ell\in \Z} \prod_{i\in \omega(\ell)} 2^{\ell_im } \prod_{i\in \overline{\omega}(\ell)\cap e} 2^{\ell_i a } \prod_{i=1}^d 2^{-\ell_ir} P_{2^{k+\ell},a}f_{k+\ell}(\cdot)\Bigg)^q \Bigg\}^{1/q} \Bigg| L_p(\R) \Bigg\|.
	\end{aligned}
\end{equation*}
If $1\leq q\leq \infty$ we have
\begin{equation*}
	\begin{aligned}
		F_{e,u}  
		&\lesssim \Bigg\|\sum_{\ell\in \Z} \prod_{i\in \omega(\ell)} 2^{\ell_im } \prod_{i\in \overline{\omega}(\ell)\cap e} 2^{\ell_i a } \prod_{i=1}^d 2^{-\ell_ir}\Bigg\{\sum\limits_{k  \in \N_0^d(e)} 2^{r|k+\ell |_1 q }\Big( P_{2^{k+\ell},a}f_{k+\ell}(\cdot)\Big)^q \Bigg\}^{1/q} \Bigg| L_p(\R) \Bigg\|
\\
		&\lesssim \Bigg\|\sum_{\ell\in \Z}\prod_{i\in \omega(\ell)} 2^{\ell_i(m-r) } \prod_{i\in \overline{\omega}(\ell)\cap e} 2^{-\ell_i (r-a) } \prod_{i\in e_0} 2^{-\ell_ir}\Bigg\{\sum\limits_{k  \in \N_0^d(e)} \Big(2^{r|k+\ell |_1  }  P_{2^{k+\ell},a}f_{k+\ell}(\cdot)\Big)^q \Bigg\}^{1/q} \Bigg| L_p(\R) \Bigg\|.
	\end{aligned}
\end{equation*}
Denoting
$$
\delta=\min\{m-r, r-a,r \}>0
$$
we get
\begin{equation*}
	\begin{aligned}
		F_{e,u}  
		&\lesssim\Bigg\|\sum_{\ell\in \Z}\prod_{i=1}^d  2^{-|\ell|_1\delta}\Bigg\{\sum\limits_{k  \in \N_0^d} \Big(2^{r|k|_1  }  P_{2^k,a}f_{k}(\cdot)\Big)^q \Bigg\}^{1/q} \Bigg| L_p(\R) \Bigg\|
\\
&\lesssim \Bigg\| \Bigg\{\sum\limits_{k  \in \N_0^d} \Big(2^{r|k|_1  }  P_{2^k,a}f_{k}(\cdot)\Big)^q \Bigg\}^{1/q} \Bigg| L_p(\R) \Bigg\|
\\
&	\lesssim 	 \big\| f\big| S^r_{p,q}F(\R) \big\|.
	\end{aligned}
\end{equation*}
Here in the last estimate we used Proposition \ref{peetremax}. Consequently
\begin{equation}\label{eq-want}
	\begin{aligned}
	S_{e,u}
& \lesssim \big\|g \big|{S^r_{p,q}}
F(\R)\big\|\cdot \big\|f \big|{S^r_{p,q}}
F(\R)\big\|.
	\end{aligned}
	\end{equation}
If $0<q<1$ we have
\begin{equation*}
	\begin{aligned}
		F_{e,u}  
		&\lesssim \Bigg\|\Bigg\{\sum\limits_{k  \in \N_0^d(e)} 2^{r|k+\ell |_1 q }\sum_{\ell\in \Z}\Bigg( \prod_{i\in \omega(\ell)} 2^{\ell_im } \prod_{i\in \overline{\omega}(\ell)\cap e} 2^{\ell_i a } \prod_{i=1}^d 2^{-\ell_ir} P_{2^{k+\ell},a}f_{k+\ell}(\cdot)\Bigg)^q \Bigg\}^{1/q} \Bigg| L_p(\R) \Bigg\|
		\\
		&\lesssim \Bigg\|\Bigg\{\sum_{\ell\in \Z} \bigg(\prod_{i\in \omega(\ell)} 2^{\ell_im } \prod_{i\in \overline{\omega}(\ell)\cap e} 2^{\ell_i a } \prod_{i=1}^d 2^{-\ell_ir}\bigg)^q\sum\limits_{k  \in \N_0^d(e)}\Big( 2^{r|k+\ell |_1  } P_{2^{k+\ell},a}f_{k+\ell}(\cdot)\Big)^q \Bigg\}^{1/q} \Bigg| L_p(\R) \Bigg\|
		\\
			&\lesssim \Bigg\|\Bigg\{\sum_{\ell\in \Z}  2^{-|\ell|_1\delta q}\sum\limits_{k  \in \N_0^d}\Big( 2^{r|k|_1  } P_{2^k,a}f_{k}(\cdot)\Big)^q \Bigg\}^{1/q} \Bigg| L_p(\R) \Bigg\|
			\\
			&\lesssim \|f|S^r_{p,q}F(\R)\|
	\end{aligned}
\end{equation*}
which implies \eqref{eq-want}.

\noindent
{\it Step 4.} The remaining cases.  Without loss of generality  
  we assume in addition that
\[
u=(u_{1}, \ldots ,u_L,u_{L+1}, \ldots ,u_N,0,\ldots,0)
\] 
with
$$ m\leq u_i\leq 2m,\quad i=1, \ldots ,L,\qquad 0\leq u_i<m, \quad i= L+1, \ldots ,N$$
and $1\leq L\le N$ and $L< d$. 
For brevity we put
\[
e_1:= \{L+1,\ldots,N\} \qquad \mbox{ and } \qquad  e_2:=\{1,\ldots,L \}\, . 
\]
By assumption both sets are nontrivial.
This covers all remaining cases up to an enumeration.
We write as
\begin{equation*}
\begin{aligned}
\Delta_{h}^{2\overline{m} -u ,e}f(x+u  \diamond h) 
&=\sum_{\eta \in U} C_\eta\Delta_{h}^{\overline{m} ,e_1}f(x_1+\eta_{1}h_{1},\ldots, x_N+\eta_Nh_N,x_{N+1},\ldots,x_d ),
\end{aligned}
\end{equation*}
where $U\subset \N_0^N$ is a finite set and $C_\eta\in \re$.
We have by Proposition \ref{trace}
\begin{equation*}
	\begin{aligned}
&\big|\Delta_{h}^{2\overline{m} -u ,e}f(x+u  \diamond h)\big|
\\
& \leq \sum_{\eta \in U} |C_\eta| \sup_{x_1,\ldots,x_L} |\Delta_{h}^{\overline{m} ,e_1}f(x_1+\eta_{1}h_{1},\ldots, x_N+\eta_Nh_N,x_{N+1}\ldots,x_d ) |
\\
&\lesssim \sum_{\eta \in U}  \big\|\Delta_{h}^{\overline{m} ,e_1}f(\,\cdot\,,x_{L+1}+\eta_{L+1}h_{L+1},\ldots, x_N+\eta_Nh_N,x_{N+1},\ldots,x_d ) \big|{S^r_{p,q}F(\re^L)}\big\|.
\end{aligned}
\end{equation*}
Denoting
$$
z_f=(y_1,\ldots,y_L,x_{L+1}+\eta_{L+1}h_{L+1},\ldots, x_N+\eta_Nh_N,x_{N+1},\ldots,x_d )
$$
we get
\begin{equation*}
	\begin{aligned}
		&\big|\Delta_{h}^{2\overline{m} -u ,e}f(x+u  \diamond h) \big|
		\\
		&\lesssim \sum_{\eta \in U} \sum_{e_2'\subset e_2}\Bigg[ \int\limits_{\re^L}\Bigg(\sum_{\mu\in \N_0^L(e_2')}2^{r|\mu|_1q}\sup_{|s_i|<2^{-\mu_i}\atop i\in e_2'}\big|\Delta_s^{\overline{m},e_2'}\Delta_{h}^{\overline{m} ,e_1}f(z_f)\big|^q \Bigg)^{p/q}\dx y_1\ldots \dx y_L\Bigg]^{1/p}.
	\end{aligned}
\end{equation*}
Similarly, with
$$
z_g=(x_1,\ldots,x_L,y_{L+1},\ldots,y_d),
$$
for some finite subset $V\subset \N_0^d(e_1)$ and $C_\theta\in \re$, $\theta\in V$, we have
\begin{equation*}
	\begin{aligned}
	\big|	\Delta_{h}^{u ,e}g(x)\big|
		&\leq \sum_{\theta\in V} |C_\theta| \big|\Delta_{h}^{u ,e_2}g(x_1,\ldots,x_L, x_{L+1}+\theta_{L+1}h_{L+1},\ldots,x_N+\theta_Nh_N,x_{N+1},\ldots,x_d )\big|
		\\
&\lesssim \sum_{\theta\in V} \sup_{x_{L+1},\ldots,x_d}|\Delta_{h}^{u ,e_2}g(x_1,\ldots,x_L, x_{L+1}+\theta_{L+1}h_{L+1},\ldots,x_N+\theta_Nh_N,x_{N+1},\ldots,x_d )\big|
		\\
		&\lesssim \big\|\Delta_{h}^{u ,e_2}g(x_1,\ldots,x_L,\, \cdot\,)  \big| S^r_{p,q}(\re^{d-L}) \big\|
		\\
		&\lesssim \sum_{e_1'\subset e_1\cup e_0}\Bigg[ \int\limits_{\re^{d-L}}\Bigg(\sum_{\nu\in \N_0^L(e_1')}2^{r|\nu|_1q}\sup_{|t_i|<2^{-\nu_i}\atop i\in e_1'}\big|\Delta_t^{\overline{m},e_1'}\Delta_{h}^{u ,e_2}g(z_g)\big|^q \Bigg)^{p/q}\dx y_{L+1}\ldots \dx y_d\Bigg]^{1/p}.
	\end{aligned}
\end{equation*}
We have
\begin{equation*}
	\begin{aligned}
&		S_{e,u}
 \lesssim \sum_{\eta \in U} \sum_{e_2'\subset e_2}\sum_{e_1'\subset e_1\cup e_0}     \Bigg\|\Bigg\{\sum\limits_{k  \in \N_0^d(e)} 2^{r|k |_1 q}  
 \\
 &\times \sup_{|h_i|<2^{-k_i}\atop i\in e}\Bigg[ \int\limits_{\re^L}\Bigg(\sum_{\mu\in \N_0^L(e_1')}2^{r|\mu|_1q}\sup_{|s_i|<2^{-\mu_i}\atop i\in e_2'}\big|\Delta_s^{\overline{m},e_2'}\Delta_{h}^{\overline{m} ,e_1}f(z_f)\big|^q \Bigg)^{p/q}\dx y_1\ldots \dx y_L\Bigg]^{q/p}
\\
&\times \Bigg[ \int\limits_{\re^{d-L}}\Bigg(\sum_{\nu\in \N_0^L(e_1')}2^{r|\nu|_1q}\sup_{|t_i|<2^{-\nu_i}\atop i\in e_1'}\big|\Delta_t^{\overline{m},e_1'}\Delta_{h}^{u ,e_2}g(z_g)\big|^q \Bigg)^{p/q}\dx y_{L+1}\ldots \dx y_d\Bigg]^{q/p}\Bigg\}^{1/q}\Bigg|L_p(\R)   \Bigg\|
\\
&
\lesssim \sum_{\eta \in U} \sum_{e_2'\subset e_2}\sum_{e_1'\subset e_1\cup e_0}     \Bigg\|\Bigg\{\sum\limits_{k  \in \N_0^d(e)} 2^{r|k |_1 q}  
 \sup_{|h_i|<2^{-k_i}\atop i\in e}\Bigg[ \int\limits_{\re^d}\Bigg(\sum_{\mu\in \N_0^L(e_1')}2^{r|\mu|_1q}\sup_{|s_i|<2^{-\mu_i}\atop i\in e_2'}\big|\Delta_s^{\overline{m},e_2'}\Delta_{h}^{\overline{m} ,e_1}f(z_f)\big|^q \Bigg)^{p/q}
\\
&\times \Bigg(\sum_{\nu\in \N_0^L(e_1')}2^{r|\nu|_1q}\sup_{|t_i|<2^{-\nu_i},i\in e_1'}\big|\Delta_t^{\overline{m},e_1'}\Delta_{h}^{u ,e_2}g(z_g)\big|^q \Bigg)^{p/q}\dx y\Bigg]^{q/p}\Bigg\}^{1/q}\Bigg|L_p(\R)   \Bigg\|\,.
	\end{aligned}
\end{equation*}
Applying Generalized inequality with $q\ge p$ we get
\begin{equation*}
	\begin{aligned}
	 S_{e,u}
		  &\lesssim \sum_{e_2'\subset e_2}\sum_{e_1'\subset e_1\cup e_0} \sum_{\eta\in U}    \Bigg\|\Bigg\{\int\limits_{\re^d}\Bigg(\sum\limits_{k  \in \N_0^d(e)} 2^{r|k |_1 q}  
 \sum_{\mu\in \N_0^L(e_1')}2^{r|\mu|_1q}	\sup_{|h_i|<2^{-k_i}\atop i\in e_1}\sup_{|s_i|<2^{-\mu_i}\atop i\in e_2'}\big|\Delta_s^{\overline{m},e_2'}\Delta_{h}^{\overline{m} ,e_1}f(z_f)\big|^q \Bigg)^p
	 \\
	 &
	\times	\Bigg(\sum_{\nu\in \N_0^L(e_1')}2^{r|\nu|_1q}	\sup_{|h_i|<2^{-k_i}\atop i\in e_2}\sup_{|t_i|<2^{-\nu_i}\atop i\in e_1'}\big|\Delta_t^{\overline{m},e_1'}\Delta_{h}^{u ,e_2}g(z_g)\big|^q \Bigg)^p\dx y\Bigg\}^{1/p}\Bigg| L_p(\R)   \Bigg\|
	\\
		&
\lesssim \sum_{e_2'\subset e_2}\sum_{e_1'\subset e_1\cup e_0}\sum_{\eta\in U}      \Bigg\|\Bigg\{\int\limits_{\re^d}\Bigg(\sum\limits_{k^1  \in \N_0^d(e_1)}  
\sum_{\mu\in \N_0^L(e_1')}2^{r(|k^1|_1+|\mu|_1)q}	\sup_{|h_i|<2^{-k_i}\atop i\in e_1}\sup_{|s_i|<2^{-\mu_i}\atop i\in e_2'}\big|\Delta_s^{\overline{m},e_2'}\Delta_{h}^{\overline{m} ,e_1}f(z_f)\big|^q \Bigg)^p
\\
&
\times	\Bigg(\sum\limits_{k^2  \in \N_0^d(e_2)}\sum_{\nu\in \N_0^L(e_1')}2^{r(|k^2|_1+|\nu|_1)q}	\sup_{|h_i|<2^{-k_i}\atop i\in e_2}\sup_{|t_i|<2^{-\nu_i}\atop i\in e_1'}\big|\Delta_t^{\overline{m},e_1'}\Delta_{h}^{u ,e_2}g(z_g)\big|^q \Bigg)^p\dx y\Bigg\}^{1/p}\Bigg| L_p(\R)   \Bigg\|.
	\end{aligned}
\end{equation*}
By changing the notation we find that
\begin{equation*}
	\begin{aligned}
&	S_{e,u}
	  \lesssim \sum_{e_2'\subset e_2}\sum_{e_1'\subset e_1\cup e_0}\sum_{\eta \in U}    \|g|S^r_{p,q}F(\R)\|
	\cdot  \Bigg\|\Bigg\{\sum\limits_{k  \in \N_0^d(e_1\cup e_1')}  
	 2^{r|k|_1q}	
	 \\
	 & \sup_{|h_i|<2^{-k_i}\atop i\in e_1\cup e_2'}\big| \Delta_{h}^{\overline{m} ,e_1\cup e_2'}f(x_1,\ldots,x_L,x_{L+1}+\eta_{L+1}h_{L+1},\ldots, x_N+\eta_Nh_N,x_{N+1},\ldots,x_d )\big|^q\Bigg\}^{1/q}\Bigg| L_p(\R) \Bigg\|. 
	\end{aligned}
\end{equation*}
Next following the same argument as Step 3 we finally get
\begin{align*}
	S_{e,u}
&\lesssim \sum_{e_2'\subset e_2}\sum_{e_1'\subset e_1\cup e_0}\sum_{\eta \in U}    \|g|S^r_{p,q}F(\R)\| \cdot \|f|S^r_{p,q}F(\R)\|
\\
& \lesssim \|g|S^r_{p,q}F(\R)\| \cdot \|f|S^r_{p,q}F(\R)\|.	 
\end{align*}
The proof is complete.
\end{proof}

By multi-linear interpolation we can extend the   results in Theorem \ref{need} to  the following. 
\begin{theorem}\label{algebra}
	Let $1< p <\infty$ and  $r>\max(1/p,1-1/p)$. Then the space $S^r_p H(\R)$ is a multiplication algebra.
\end{theorem}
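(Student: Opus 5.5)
We prove Theorem \ref{algebra} by bilinear complex interpolation between the integer-smoothness spaces $S^m_pW(\R)=S^m_pH(\R)$, for which Theorem \ref{need} supplies the algebra property. Fix $1<p<\infty$ and $r>\max(1/p,1-1/p)$.

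\emph{Case $r\in\N$.} The claim is Theorem \ref{need} together with $S^r_pW(\R)=S^r_pH(\R)$.

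\emph{Case $r\notin\N$.} Write $r=(1-\theta)m_0+\theta m_1$ with $m_0=\lfloor r\rfloor$, $m_1=m_0+1$ and $\theta\in(0,1)$. For mixed Bessel-potential spaces the complex method gives
\[
[S^{m_0}_pH(\R),S^{m_1}_pH(\R)]_\theta=S^r_pH(\R).
\]
To verify this, view the lift $I_s=\gf^{-1}\prod_i(1+y_i^2)^{s/2}\gf$ as an analytic family in $s$. By the tensor-product Mikhlin theorem, the purely imaginary powers $\prod_i(1+y_i^2)^{{\rm i}\tau/2}$ are $L_p$-Fourier multipliers with polynomial growth in $\tau$. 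The standard Stein argument then identifies the interpolation space.

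If $m_0\ge1$, the bilinear map $(f,g)\mapsto fg$ is bounded
\[
S^{m_j}_pH(\R)\times S^{m_j}_pH(\R)\to S^{m_j}_pH(\R),\qquad j=0,1,
\]
by Theorem \ref{need}. Multilinear complex interpolation (Calderón) then yields boundedness on $S^r_pH(\R)\times S^r_pH(\R)$, which is the claim.

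The real difficulty is $0<r<1$, where $m_0=0$. Here $L_p(\R)$ is not an algebra, so the lower endpoint must be replaced. This is exactly where the hypothesis $r>\max(1/p,1-1/p)$ should enter. My plan is to interpolate the multiplication operator $T_g:f\mapsto gf$ with $g$ held fixed, using two bounds.

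\begin{itemize}
\item At the endpoint $L_p$: $\|gf|L_p\|\le\|g|C(\R)\|\,\|f|L_p\|\lesssim\|g|S^r_pH\|\,\|f|L_p\|$, by Lemma \ref{emb1} and Proposition \ref{littlewood}, since $r>1/p$.
\item At the endpoint $S^1_pW$: by the Leibniz rule, $D^\alpha(gf)$ with $|\alpha|_\infty\le1$ is a sum of terms $D^\beta g\,D^{\alpha-\beta}f$. The terms with $\beta=0$ are controlled as in the first bullet. The terms containing derivatives of $g$ are what force the restriction.
\end{itemize}

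Rather than treat the second bullet by hand, I would argue by duality. The condition $r>1-1/p$ says $r>1/p'$, so $S^r_{p'}H$ also embeds into $C(\R)$. Mixed derivatives of $g$ of the form $D^\beta g$ with $\beta\in\{0,1\}^d$ pair against $f\,D^{\alpha-\beta}\phi$ for test functions $\phi$. Transposing the multiplier through the $L_p$–$L_{p'}$ duality of $S^{-s}_{p'}H$ and $S^s_pH$ allows one to trade smoothness between $f$ and $g$. Concretely, I would aim to show that $T_g$ is bounded on $S^{s}_pH$ and on $S^{-s}_{p'}H$ for a suitable $s$. Interpolating these two bounds, again by the complex method and using the same multiplier lemma, gives boundedness on $S^r_pH$ with norm $\lesssim\|g|S^r_pH\|$.

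This endpoint step for $0<r<1$ is the one I expect to be the main obstacle. If it fails, a fallback is to note that for $1<p\le2$ we have $p\le q=2$, so Theorem \ref{main-be} combined with Proposition \ref{littlewood} gives the result directly. The range $p>2$ could then be handled by duality, exchanging the roles of $p$ and $p'$ via the condition $r>1-1/p$.
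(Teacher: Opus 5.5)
\textbf{Where the proposal stands.} Three parts of your proposal are fine:
\begin{itemize}
\item the case $r\in\N$;
\item non-integer $r>1$, via bilinear interpolation at fixed $p$ between $S^{m_0}_pW(\R)$ and $S^{m_0+1}_pW(\R)$, which is cleaner than the paper's ``similar argument'';
\item your fallback for $1<p\le 2$, which uses Theorem \ref{main-be} with $q=2\ge p$ and Proposition \ref{littlewood} exactly as the paper does.
\end{itemize}
The genuine gap is the remaining case $2<p<\infty$, $1-\frac1p<r<1$. None of your suggested arguments works there.

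\textbf{Why your arguments fail in that case.}
\begin{itemize}
\item \emph{Fixed-$g$ interpolation.} This needs $\|gf\,|\,S^1_pW(\R)\|\lesssim\|g\,|\,S^r_pH(\R)\|\,\|f\,|\,S^1_pW(\R)\|$, which is false for $r<1$. Take a compactly supported $g\in S^r_pH(\R)\setminus S^1_pW(\R)$ and $f\in C_0^\infty(\R)$ with $f\equiv 1$ near $\supp g$. Then $gf=g\notin S^1_pW(\R)$.
\item \emph{Duality variant.} This is circular. Boundedness of $T_g$ on $S^{-s}_{p'}H(\R)$ is equivalent to boundedness on $S^{s}_pH(\R)$, so the second bound adds nothing. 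Moreover $[S^s_pH(\R),S^{-s}_{p'}H(\R)]_\theta$ has integrability $\frac1{p_\theta}=\frac{1-\theta}{p}+\frac{\theta}{p'}$. For $p\neq 2$ this equals $\frac1p$ only when $\theta=0$, so you never land in $S^r_pH(\R)$ unless you already know the result there.
\item \emph{Exchanging $p$ and $p'$.} This fails too. The algebra property of $S^r_{p'}H(\R)$, available since $r>1/p'$, dualizes to a multiplier statement on $S^{-r}_pH(\R)$, which has negative smoothness. It does not give the algebra property of $S^r_pH(\R)$.
\end{itemize}

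\textbf{The missing idea.} Interpolate bilinearly between two algebras with \emph{different} integrability exponents. The smoothness below $1$ must come from an endpoint where Theorem \ref{main-be} applies, that is, from $S^{r_1}_2H(\R)=S^{r_1}_{2,2}F(\R)$ with $r_1>\frac12$.
\begin{itemize}
\item Choose $\Theta\in\big(2(1-r),\frac2p\big)$. This interval is nonempty precisely because $r>1-\frac1p$.
\item Put $r_1:=1-\frac{1-r}{\Theta}\in(\frac12,1)$.
\item Define $p_0$ by $\frac1p=\frac{1-\Theta}{p_0}+\frac{\Theta}{2}$. Then $p<p_0<\infty$ and $r=(1-\Theta)\cdot 1+\Theta r_1$.
\end{itemize}
Hence $S^r_pH(\R)=[S^1_{p_0}H(\R),S^{r_1}_2H(\R)]_\Theta$. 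Both endpoints are multiplication algebras: the first by Theorem \ref{need}, the second by Theorem \ref{main-be} with $p=q=2$. Proposition \ref{mul} then gives the claim.

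This is the paper's argument. Its write-up states $p_0\in(1,2)$ and writes the second endpoint as $S^{r_1}_pH(\R)$, but the exponent relation actually forces $p_0>p$ and the endpoint $S^{r_1}_2H(\R)$. In particular, the hypothesis $r>1-\frac1p$ enters exactly through the requirement $r_1>\frac12$ at the $L_2$ endpoint, not through any duality.
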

To prove Theorem \ref{algebra} we need multi-linear interpolation in the sense of Calder\'on \cite{Cal64}. Let $X$ and $Y$ be Banach spaces. By $[X,Y]_{\Theta}$, $\Theta\in (0,1)$, we denote the classical complex interpolation method of Calder\'on, see \cite{BL76B,Lu09B,Tr78} for details.
\begin{proposition} \label{mul}
	Let $(A_i,B_i)$, $i=1,\ldots,n$, and $(A,B)$ be interpolation pairs. Let $L(x_1,\ldots,x_n)$, $x_i\in A_i\cap B_i$ be a multilinear operator defined in the direct sum $ \bigoplus_{i=1}^n(A_i\cap B_i)$ with the value in $A\cap B$ such that
$$
	\| L(x_1,\ldots,x_n)|A\| \leq M_A\prod_{i=1}^n \|x_i|A_i\|
$$
and
$$
	\| L(x_1,\ldots,x_n)|B\| \leq M_B\prod_{i=1}^n \|x_i|B_i\|.
$$
	Then $L$ can be extended continuously to a multilinear mapping from $\bigoplus_{i=1}^n [A_i,B_i]_{\Theta}$ into $[A,B]_{\Theta}$ with 
\begin{equation*}
	\big\| L(x_1,\ldots,x_n)\big|[A,B]_{\Theta}\big\| \leq M_A^{1-\Theta}M_B^{\Theta}\prod_{i=1}^n \big\|x_i\big|[A_i,B_i]_{\Theta}\big\|.
\end{equation*}
\end{proposition}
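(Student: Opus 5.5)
The plan is to follow Calder\'on's original argument: build an analytic family by composing $L$ with analytic families for the arguments, then apply the definition of the complex method. Recall that $[X,Y]_\Theta$ consists of the values $F(\Theta)$ of functions $F\in\mathcal{F}(X,Y)$. These are functions that are bounded and continuous on the closed strip $\overline{S}=\{0\le \operatorname{Re} z\le 1\}$, analytic in its interior, with values in $X+Y$, and such that $t\mapsto F(it)\in X$ and $t\mapsto F(1+it)\in Y$ are bounded and continuous. The norm is $\|x|[X,Y]_\Theta\|=\inf\{\|F|\mathcal{F}\|: F(\Theta)=x\}$, where
\[
\|F|\mathcal{F}\| =\max\Big(\sup_t\|F(it)|X\|,\ \sup_t\|F(1+it)|Y\|\Big).
\]
We may assume $M_A,M_B>0$; otherwise replace them by $M_A+\varepsilon$ and $M_B+\varepsilon$ and let $\varepsilon\to0$ at the end.

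\emph{Step 1: approximating families.} Fix $x_i\in A_i\cap B_i$ and $\varepsilon>0$. We use the standard density lemma (Bergh--L\"ofstr\"om, Lemma 4.2.3). It says that the subclass $\mathcal{F}_0(A_i,B_i)$ of functions of the form
\[
f(z)=e^{\lambda z^2}\sum_{j=1}^{J}e^{\lambda_j z}a_j,\qquad a_j\in A_i\cap B_i,\ \lambda,\lambda_j\in\re,
\]
is dense in $\mathcal{F}(A_i,B_i)$. Moreover, the norm $\|x|[A_i,B_i]_\Theta\|$ can be computed as the infimum over this subclass. Hence we can choose $f_i\in\mathcal{F}_0(A_i,B_i)$ with $f_i(\Theta)=x_i$ and
\[
\|f_i|\mathcal{F}\|\le (1+\varepsilon)\|x_i|[A_i,B_i]_\Theta\|.
\]
The point of using $\mathcal{F}_0$ is that each $f_i(z)$ lies in $A_i\cap B_i$ for every $z$. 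This is exactly where $L$ is defined.

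\emph{Step 2: the composed family.} Set
\[
G(z):=M_A^{z-1}M_B^{-z}\,L\big(f_1(z),\ldots,f_n(z)\big).
\]
By multilinearity, $G(z)$ is a finite sum of terms of the form (scalar entire function of $z$)$\cdot L(a_{1,j_1},\ldots,a_{n,j_n})$, with fixed vectors in $A\cap B$. So $G$ is an entire $A\cap B$-valued function. The Gaussian factor $e^{n\lambda z^2}$ with $\lambda<0$ makes it bounded and continuous on $\overline{S}$ in both norms, so $G\in\mathcal{F}(A,B)$. On $\operatorname{Re} z=0$ we have $|M_A^{it-1}M_B^{-it}|=M_A^{-1}$, so the first hypothesis gives
\[
\|G(it)|A\|\le \prod_i\|f_i(it)|A_i\|\le\prod_i\|f_i|\mathcal{F}\|.
\]
Similarly, on $\operatorname{Re} z=1$ the second hypothesis gives $\|G(1+it)|B\|\le\prod_i\|f_i|\mathcal{F}\|$. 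Since $G(\Theta)=M_A^{\Theta-1}M_B^{-\Theta}L(x_1,\ldots,x_n)$, the definition of the complex norm yields
\[
\|L(x_1,\ldots,x_n)|[A,B]_\Theta\|\le M_A^{1-\Theta}M_B^{\Theta}(1+\varepsilon)^n\prod_i\|x_i|[A_i,B_i]_\Theta\|.
\]
Letting $\varepsilon\to0$ gives the estimate on $\bigoplus_i(A_i\cap B_i)$.

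\emph{Step 3: extension.} The space $A_i\cap B_i$ is dense in $[A_i,B_i]_\Theta$. A multilinear map satisfying the bound of Step 2 is Lipschitz on bounded sets, by telescoping $L(x)-L(y)$ one argument at a time. It therefore extends uniquely and continuously to $\bigoplus_i[A_i,B_i]_\Theta$, with values in the completion, which is $[A,B]_\Theta$. The extension keeps the same constant and remains multilinear.

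The main obstacle is Step 1. One needs the approximating functions to take values in $A_i\cap B_i$, since $L$ is only defined there, while still computing the interpolation norm. The density lemma for $\mathcal{F}_0$ is what I would invoke; everything else is the three-lines-type bookkeeping above.
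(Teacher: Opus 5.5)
This is the standard Calder\'on argument for multilinear complex interpolation (as in \cite{BL76B}), which is exactly what the paper relies on: it gives no proof of Proposition~\ref{mul} and only cites \cite{Cal64}, so your route is correct and is the intended one. Two details need fixing. First, the Gaussian factor needs $\lambda>0$, not $\lambda<0$, because $|e^{\lambda z^2}|=e^{\lambda(s^2-t^2)}$ for $z=s+it$; the density lemma is stated for the class $\mathcal{F}$ with $F(j+it)\to 0$ as $|t|\to\infty$, which you dropped from your definition. Second, density of $\mathcal{F}_0$ only gives $f_i(\Theta)$ close to $x_i$ in $[A_i,B_i]_\Theta$, not equal to it. To fix this, take a near-optimal $F_i$ and approximate $k_i=(F_i-e^{\delta(z-\Theta)^2}x_i)/\phi$ rather than $F_i$ itself, by some $\kappa_i\in\mathcal{F}_0$. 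Here $\phi$ is a conformal map of the strip onto the unit disc with $\phi(\Theta)=0$ and $|\phi|=1$ on both boundary lines. Then $f_i=e^{\delta(z-\Theta)^2}x_i+\phi\kappa_i$ satisfies $f_i(\Theta)=x_i$ and $\|f_i-F_i|\mathcal{F}\|=\|\kappa_i-k_i|\mathcal{F}\|$. Your Step~2 still goes through, since it only uses that each $f_i$ is a finite sum of bounded analytic scalar functions times vectors of $A_i\cap B_i$.
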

\begin{proof}[Proof of Theorem \ref{algebra}]  If $r\in \N$, the assertion was already proved,  see Theorem \ref{need}. The case $1<p\leq 2$ is a consequence of Theorem \ref{main-be} and Proposition \ref{littlewood} .
	We consider the case $2< p <\infty$. Let $1-\frac{1}{p}<r<1$. There exits $\Theta\in (0,1)$, $r_1>1/2$ and $p_0\in (1,2)$ such that
	\begin{equation*}
	r= (1-\Theta)+\Theta r_1,\qquad \text{and}\qquad \frac{1}{p}=\frac{1-\Theta}{p_0}+\frac{\Theta}{2}.
	\end{equation*}
	It follows that
	\begin{equation*}
	S^r_pH(\R) = \big[S^1_{p_0}H(\R),S^{r_1}_pH(\R)\big]_{\Theta},
	\end{equation*}
	see \cite[Theorem 1.7]{Ull06} or Vybiral \cite[Theorem 4.6]{Vy06}. Now Proposition  \ref{mul} and Theorem \ref{need}   imply the result in this case. Similar argument holds for $r\in (m,m+1)$, $m\in \N$. The proof is completed.
\end{proof}


\noindent
{\bf Acknowledgments.} This research is funded by the Vietnam National Foundation for Science and Technology Development (NAFOSTED) in the frame of the NAFOSTED-SNSF Joint Research Project under Grant IZVSZ2\_229568.


\bibliographystyle{abbrv}

\bibliography{AllBib}

\begin{thebibliography}{10}

\bibitem{BH21}
A.~Behzadan and M.~Holst.
\newblock {Multiplication in Sobolev Spaces, Revisited }.
\newblock {\em Ark. Mat.}, 59:275--306, 2021.

\bibitem{BL76B}
J.~Bergh and J.~L\"ofstr\"om.
\newblock {\em {Interpolation Spaces. An Introduction}}.
\newblock Lect. Notes, Scuola Normale Superiore PisaSpringer, New York, 1976.

\bibitem{BuGr04}
H.-J. Bungartz and M.~Griebel.
\newblock Sparse grids.
\newblock {\em Acta Numer.}, 13:147--269, 2004.

\bibitem{Cal64}
A.~P. Calder\'on.
\newblock {Intermediate spaces and interpolation}.
\newblock {\em Studia Math.}, 24:113--190, 1964.

\bibitem{Han10}
M.~Hansen.
\newblock {\em Nonlinear Approximation and Function Spaces of Dominating Mixed
  Smoothness}.
\newblock PhD thesis, University of Jena, 2010.

\bibitem{HV09}
M.~Hansen and J.~Vybiral.
\newblock {The Franke-Jawerth embedding of spaces with dominating mixed
  smoothness}.
\newblock {\em Georgian Math. J.}, 19:667--682, 2009.

\bibitem{LSYY24}
Z.~Li, W.~Sickel, D.~Yang, and W.~Yuan.
\newblock {Pointwise multipliers for Besov spaces
  $B^{0,b}_{p,\infty}(\mathbb{R}$ with only logarithmic smoothness}.
\newblock {\em Ann. Mat. Pura Appl.}, 203:703–763, 2024.

\bibitem{Lu09B}
A.~Lunardi.
\newblock {\em {Interpolation theory}}.
\newblock Lect. Notes, Scuola Normale Superiore Pisa, 2009.

\bibitem{MS85B}
V.~Maz’ya and T.~Shaposhnikova.
\newblock {\em {Theory of Multipliers in Spaces of Differentiable Functions}}.
\newblock Pitman,Boston, MA, 1985.

\bibitem{MS09B}
V.~Maz’ya and T.~Shaposhnikova.
\newblock {\em {Theory of Sobolev Multipliers with Applications to Differential
  andIntegral Operators}}.
\newblock Springer, Berlin, 2009.

\bibitem{Netrusov95}
Y.~V. Netrusov.
\newblock {Theorems on traces and multipliers for functions in
  Lizorkin–Triebel spaces}.
\newblock {\em J. Math. Sci.}, 77:3221--3224, 1995.

\bibitem{Ng25}
V.~K. Nguyen.
\newblock {Optimality conditions for change of variable operators in Sobolev
  spaces with mixed derivatives}.
\newblock {\em Anal. Math.}, to appear.

\bibitem{NS1702}
V.~K. Nguyen and W.~Sickel.
\newblock {Pointwise multipliers for Besov spaces of dominating mixed
  smoothness -- II}.
\newblock {\em Sci. China Math.}, 60:2241--2262, 2017.

\bibitem{NS1701}
V.~K. Nguyen and W.~Sickel.
\newblock {Pointwise multipliers for Sobolev and Besov spaces of dominating
  mixed smoothness}.
\newblock {\em J. Math. Anal. Appl.}, 452:62--90, 2017.

\bibitem{NoWo08}
E.~Novak and H.~Wo{\'z}niakowski.
\newblock {\em {Tractability of Multivariate Problems, Volume I: Linear
  Information}}.
\newblock EMS Tracts in Mathematics, Vol. 6, Eur. Math. Soc. Publ. House,
  Z\"urich, 2008.

\bibitem{NoWo10}
E.~Novak and H.~Wo{\'z}niakowski.
\newblock {\em {Tractability of Multivariate Problems, Volume II: Standard
  Information for Functionals}}.
\newblock EMS Tracts in Mathematics, Vol. 12, Eur. Math. Soc. Publ. House,
  Z\"urich, 2010.

\bibitem{NoWo12}
E.~Novak and H.~Wo{\'z}niakowski.
\newblock {\em {Tractability of Multivariate Problems: Volume III: Standard
  Information for Operators }}.
\newblock EMS Tracts in Mathematics, Vol. 18, Eur. Math. Soc. Publ. House,
  Z\"urich, 2012.

\bibitem{RS96B}
T.~Runst and W.~Sickel.
\newblock {\em {Sobolev Spaces of Fractional Order, Nemytskij Operators, and
  Nonlinear PartialDifferential Equations}}.
\newblock De Gruyter Series in Nonlinear Analysis and Applications 3. Walter de
  Gruyterand Co., Berlin, 1996.

\bibitem{ST87B}
H.~Schmeisser and H.~Triebel.
\newblock {\em {Topics in Fourier Analysis and Function Spaces}}.
\newblock Chichester; New York : Wiley, 1987.

\bibitem{SST08}
C.~Schwab, E.~{S\"uli}, and R.~Todor.
\newblock {Sparse finite element approximation of high-dimensional
  transport-dominated diffusion problems}.
\newblock {\em ESAIM Math. Model. Numer. Anal.}, 42:777--819, 2008.

\bibitem{Tem93B}
V.~N. Temlyakov.
\newblock {\em {Approximation of Periodic Functions}}.
\newblock Computational Mathematics and Analysis Series, Nova Science
  Publishers, Inc., Commack, NY., 1993.

\bibitem{Tr78}
H.~Triebel.
\newblock {\em {Besov-Sobolev-Hardy Spaces}}.
\newblock Teubner-Texte Math 9, Teubner, Leipzig,, 1978.

\bibitem{Tri83B}
H.~Triebel.
\newblock {\em {Theory of function spaces}}.
\newblock Birkh\"auser, Basel, 1983.

\bibitem{Tri06B}
H.~Triebel.
\newblock {\em {Theory of Function Spaces III}}.
\newblock Birkh\"auser Verlag, Basel, 2006.

\bibitem{Tri15B}
H.~Triebel.
\newblock {\em {Hybrid Function Spaces, Heat and Navier-Stokes Equations}}.
\newblock European Mathematical Society, 2015.

\bibitem{Ull06}
T.~Ullrich.
\newblock {Function spaces with dominating mixed smoothness. Characterizations
  by differences}.
\newblock {\em Jenaer Schriften zur Mathematik und Informatik},
  Jena:Math/Inf/05/06, 2006.

\bibitem{Vy06}
J.~Vybiral.
\newblock Function spaces with dominating mixed smoothness.
\newblock {\em Diss. Math.}, 436:1--73, 2006.

\bibitem{Yser10}
H.~Yserentant.
\newblock {\em {Regularity and Approximability of Electronic Wave Functions}}.
\newblock Lecture Notes in Mathematics, Springer, 2010.

\end{thebibliography}

\end{document}